\documentclass[10pt]{amsart}

\usepackage[T1]{fontenc}
\usepackage{lmodern}
\usepackage{amsmath,amssymb,mathtools}
\usepackage{microtype}
\usepackage{booktabs}
\usepackage{enumitem}
\usepackage{url}
\usepackage[hidelinks]{hyperref}

\numberwithin{equation}{section}

\newtheorem{theorem}{Theorem}[section]
\newtheorem{lemma}[theorem]{Lemma}
\newtheorem{corollary}[theorem]{Corollary}
\newtheorem{problem}[theorem]{Problem}
\theoremstyle{remark}
\newtheorem{remark}[theorem]{Remark}

\newtheorem*{gneitingproblem}{Problem (Gneiting's Problem)}

\newcommand{\R}{\mathbb{R}}
\newcommand{\C}{\mathbb{C}}
\newcommand{\N}{\mathbb{N}}
\newcommand{\Sph}{\mathbb{S}}
\newcommand{\supp}{\operatorname{supp}}
\newcommand{\dd}{\,d}
\newcommand{\one}{\mathbf{1}}
\newcommand{\eps}{\varepsilon}
\newcommand{\wh}{\widehat}
\newcommand{\wt}{\widetilde}
\newcommand{\Rea}{\operatorname{Re}}
\newcommand{\Ima}{\operatorname{Im}}

\title[Even-Dimensional Gneiting's Problem]{Isotropic Positive Definite Functions on Spheres:\\Even-Dimensional Gneiting's Problem}

\author{Yan Ge}
\address{Department of Mathematics, School of Science, North China University of Technology, Beijing 100144, China}
\email{yge@ncut.edu.cn}

\keywords{Positive definite functions; sphere; Euclidean space; Gegenbauer polynomials; compact support; Bessel functions; Fourier transform}

\subjclass[2020]{42A82;  33C45; 42C10.}

\hypersetup{
  pdftitle={Isotropic Positive Definite Functions on Spheres: Even-Dimensional Gneiting's Problem},
  pdfauthor={Yan Ge},
  pdfsubject={Positive definite functions on Euclidean spaces and spheres},
  pdfkeywords={positive definite functions, spheres, Gegenbauer polynomials, compact support, Fourier transform}
}

\begin{document}

\begin{abstract}
We study when a compactly supported isotropic positive definite function on a Euclidean space remains positive definite after Euclidean distance is replaced by spherical geodesic distance. The corresponding assertion is known in odd dimensions. We prove that it fails in every even dimension, no matter how small a positive upper bound is imposed on the support. An explicit counterexample is also given in dimension two.
\end{abstract}

\maketitle

\section{Introduction}

An important theme in the theory of positive definite functions is the effect of changing the underlying geometry. Isotropic positive definite functions on Euclidean spaces and spheres arise naturally in approximation theory, spatial statistics, and the theory of random fields; see \cite{Buhmann,DaiXu,FasshauerSchumaker,Gneiting2002,Stewart,Wendland} and the references therein. Although positive definiteness is expressed by the same matrix inequality in both settings, Euclidean distance and spherical geodesic distance lead, respectively, to Fourier--Bessel representations and Gegenbauer expansions; see \cite{Bochner,DaiXu,Schoenberg,SteinWeiss}. This raises the question of when a positive definite function for one geometry remains positive definite for the other.

Let $(\Omega,\rho)$ be a metric space. A continuous real function $g$ of the distance variable is called positive definite if
\begin{equation}\label{eq:pd-def}
 \sum_{i,j=1}^N \overline{c_i}c_j\,g\bigl(\rho(x_i,x_j)\bigr)\ge 0
\end{equation}
for every $N\ge1$, all $x_1,\dots,x_N\in\Omega$, and all $c_1,\dots,c_N\in\C$. We denote by $\Phi_d$ the normalized continuous isotropic positive definite functions on $\R^d$, and by $\Psi_d$ the normalized continuous isotropic positive definite functions on the unit sphere  $\Sph^{d}:=\{x\in\mathbb{R}^{d+1}\,:\,|x|=1\}$. Thus $\phi\in\Phi_d$ if $\phi(0)=1$ and $(x,y)\mapsto\phi(|x-y|)$ is positive definite on $\R^d$; the definition of $\Psi_d$ is obtained by replacing $|x-y|$ with the geodesic distance on $\Sph^d$. Gneiting formulated the following problem in \cite{Gneiting2013}.

\begin{gneitingproblem}
Let $d\ge1$ and let $\phi\in\Phi_d$ satisfy $\supp\phi\subset[0,\pi]$. Is it always true that
\begin{equation}\label{eq:gneiting}
 \phi|_{[0,\pi]}\in\Psi_d?
\end{equation}
\end{gneitingproblem}
Here $\phi|_{[0,\pi]}$ denotes the restriction of $\phi$ to the interval $[0,\pi]$. The restricted one-variable function is then evaluated at spherical geodesic distance.

Gneiting's problem differs from the ordinary restriction of a Euclidean positive definite function to a sphere. There are two natural constructions:
\begin{enumerate}[label=(\arabic*)]
\item restrict a Euclidean isotropic positive definite function on $\R^{d+1}$ to $\Sph^d$, which replaces the distance variable $r$ by the chordal distance $2\sin(\theta/2)$;
\item keep the one-variable function unchanged and replace Euclidean distance by the geodesic distance $\theta$.
\end{enumerate}
The first construction preserves positive definiteness by restriction, since
$$
 |\xi-\eta|=2\sin\frac{\rho_{\Sph^d}(\xi,\eta)}2.
$$
The second construction does not come from an isometric embedding and is the subject of this paper.

For Gneiting's problem, considerable progress on the odd-dimensional case
has been achieved in recent years; see, for example,
\cite{EmeryEtAl,FengGe,Gneiting2013,LuMa,NieMa}. For scalar-valued
functions, Nie and Ma \cite{NieMa} proved the assertion for
every odd $d\ge1$; see also Feng and Ge \cite{FengGe} for a
new proof in the setting of general odd dimensions. Lu and Ma \cite{LuMa} established an
odd-dimensional analogue for matrix-valued covariance functions on spheres
and real projective spaces, while Emery, Mery, Khorram, and Porcu
\cite{EmeryEtAl} obtained further matrix-valued results. The main conclusion
for odd dimensions can be formulated as follows.

\begin{theorem}[Odd dimensions]\label{thm:odd}
Let $d\ge1$ be odd and let $\phi\in\Phi_d$. If $\supp\phi\subset[0,\pi]$, then $\phi|_{[0,\pi]}\in\Psi_d$.
\end{theorem}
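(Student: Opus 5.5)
By Schoenberg's theorem, $\phi|_{[0,\pi]}\in\Psi_d$ if and only if every Gegenbauer coefficient
\[
 a_n=\int_0^\pi \phi(\theta)\,C_n^{\lambda}(\cos\theta)\,\sin^{d-1}\theta\,\dd\theta,\qquad \lambda=\tfrac{d-1}{2},
\]
is nonnegative (for $d=1$, read $C_n^{0}(\cos\theta)$ as $\cos n\theta$). The plan is to show $a_n\ge0$ by representing $C_n^\lambda(\cos\theta)\sin^{d-1}\theta$ through the Euclidean Fourier--Bessel kernel. We then invoke Bochner--Schoenberg on $\R^d$: since $\phi\in\Phi_d$, its radial Fourier transform
\[
 \wh\phi(s)=c_d\int_0^\infty \phi(r)\,\Omega_d(rs)\,r^{d-1}\dd r, \qquad \Omega_d(t)=\Gamma(d/2)(2/t)^{(d-2)/2}J_{(d-2)/2}(t),
\]
is nonnegative. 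The support hypothesis $\supp\phi\subset[0,\pi]$ is used exactly so that the integral defining $a_n$ and the Euclidean one live on the same interval $[0,\pi]$.

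\textbf{Step 1 ($d=1$).} Here $a_n=\int_0^\pi\phi(\theta)\cos n\theta\,\dd\theta=\tfrac12\int_\R\phi(|t|)e^{int}\dd t=\tfrac12\wh\phi(n)\ge0$. This holds because $\phi(|\cdot|)$ vanishes outside $[-\pi,\pi]$.

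\textbf{Step 2 (odd $d=2m+1$).} Here $\lambda=m$ is an integer, and the weight $\sin^{2m}\theta$ together with $C_n^m(\cos\theta)$ is a trigonometric polynomial. I would use the classical identity expressing $C_n^{m}(\cos\theta)\sin^{2m}\theta$, up to a positive constant, as an $m$-fold iterated difference of $\cos((n+j)\theta)$. Concretely,
\[
 \sin^{2m}\theta\, C_n^m(\cos\theta)=\sum_{j}\beta_{j}\cos\bigl((n+m-2j)\theta\bigr).
\]
In parallel, the odd-dimensional Bessel kernel $\Omega_d(rs)r^{d-1}$ is $(1/s\,\partial_s)^m$ applied to $\cos(rs)$ times powers. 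The cleanest route is to find a positive measure $\mu_n$ on $[0,\infty)$ with
\[
 C_n^m(\cos\theta)\sin^{2m}\theta=\int \Omega_d(\theta s)\,\theta^{2m}\,\dd\mu_n(s)\quad\text{on }[0,\pi].
\]
If this holds, then $a_n=\int\wh\phi\,\dd\mu_n/c_d\ge0$.

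An alternative that avoids constructing $\mu_n$ explicitly is induction on dimension via the descent operator. One uses $\mathcal{D}\phi(r)=-\phi'(r)/r$ (Wendland/Matheron "mont\'ee/descente"), which maps $\Phi_{d}\to\Phi_{d-2}$ up to normalization. On the sphere one uses the matching Gegenbauer identity $\frac{d}{d\theta}C_n^{\lambda}(\cos\theta)=-2\lambda\sin\theta\, C_{n-1}^{\lambda+1}(\cos\theta)$. The key is then an integration by parts showing $a_n^{(d)}$ equals a positive combination of coefficients $a_k^{(d-2)}$ of a related function. That related function is not $\mathcal D\phi$ itself, since $r$ versus $\sin\theta$ mismatch. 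This is why I would instead go the Fourier route: write
\[
 a_n=\frac{1}{2\pi}\int_0^\infty \wh\phi(s)\,K_n(s)\,s^{d-1}\dd s, \qquad K_n(s)=\int_0^\pi \Omega_d(\theta s)\,C_n^m(\cos\theta)\sin^{2m}\theta\,\dd\theta\cdot(\text{const}),
 \]
using Fourier inversion of $\phi$ on $[0,\pi]$. The task is then reduced to proving $K_n\ge0$, which would amount to a Fejér/Askey-type positivity for a Bessel--Gegenbauer integral.

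\textbf{Main obstacle.} The main obstacle is this positivity of the kernel, equivalently producing the positive measure $\mu_n$. The inversion formula by itself does not give positivity, because it pairs $\wh\phi$ against a possibly signed kernel. What I would try first is to exploit that for odd $d$ the function $\Omega_d(t)t^{d-1}$ is an elementary combination of $\cos t$ and $\sin t$ with polynomial coefficients. After the substitution, $a_n$ becomes a finite linear combination of the values $\wh{\phi_1}(n+m-2j)$ for the one-dimensional profile $\phi_1=\mathcal{I}^m\phi$ (iterated mont\'ee). These values are nonnegative by Step 1, since $\mathcal I^m\phi\in\Phi_1$ with support in $[0,\pi]$. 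The check is that the coefficients $\beta_j$ recombine into nonnegative weights; I expect this to follow from matching the Gegenbauer lowering identity with the mont\'ee identity term by term, but this bookkeeping is the step I am least sure of.
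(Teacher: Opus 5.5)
\paragraph{Verdict.} Your Step~1 is correct. Step~2, which is the whole theorem once $d\ge3$, is left open, and the concrete plan in your last paragraph cannot work as stated.

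\paragraph{Why your plan fails.} Since $\sin^{2m}\theta\,C_n^m(\cos\theta)$ is a polynomial of degree $n+2m$ and parity $n$ in $\cos\theta$, and it is orthogonal on $[0,\pi]$ to $\cos k\theta=T_k(\cos\theta)$ for $k<n$, the correct expansion is
\[
 \sin^{2m}\theta\,C_n^m(\cos\theta)=\sum_{j=0}^m\beta_j\cos\bigl((n+2j)\theta\bigr).
\]
The frequencies are $n,n+2,\dots,n+2m$, not $n+m-2j$; for example, $m=1$ gives $\tfrac12(\cos n\theta-\cos(n+2)\theta)$.

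The left side equals $C_n^m(1)\theta^{2m}+O(\theta^{2m+2})$. Hence $\sum_j\beta_j(n+2j)^{2i}=0$ for $0\le i\le m-1$, and in particular $\sum_j\beta_j=0$. So no bookkeeping can make these weights nonnegative, and positivity of $a_n$ cannot come from the nonnegativity of each $\int_0^\pi\phi(\theta)\cos((n+2j)\theta)\,d\theta$ given by Step~1.

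Passing to $\mathcal I^m\phi$ does not help either. $\wh{\mathcal I^m\phi}(k)$ is a positive multiple of $\int_0^\pi\phi(\theta)\,\theta^{2m}\Omega_{2m+1}(k\theta)\,d\theta$. A finite identity of the kind you want would therefore force $\sin^{2m}\theta\,C_n^m(\cos\theta)=\theta^{2m}\sum_j w_j\Omega_{2m+1}(k_j\theta)$, which is the $\theta$ versus $\sin\theta$ mismatch again. For $m=1$ it would make $\sin\theta\,\sin((n+1)\theta)/\theta$ a finite sum of sines, which is impossible because it tends to $0$ at infinity.

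A side correction: where it is defined, $\mathcal D$ maps $\Phi_d$ into $\Phi_{d+2}$. It is $\mathcal I$ that lowers the dimension.

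\paragraph{The missing idea.} Keep the one-dimensional transform of $\phi$ itself, but view it as a function of $u=s^2$. Let $G(u):=\int_0^\pi\phi(t)\cos(\sqrt u\,t)\,dt$, so that $a_n=\sum_{j=0}^m\beta_jG(u_j)$ with distinct nodes $u_j=(n+2j)^2$.
\begin{enumerate}
\item Since $\frac{d}{du}=\frac1{2s}\frac{d}{ds}$ and $\bigl(-\frac1z\frac{d}{dz}\bigr)^m\cos z=\sqrt{\pi/2}\,z^{1/2-m}J_{m-1/2}(z)$, the quantity $(-1)^mG^{(m)}(s^2)$ is a positive multiple of the radial Fourier transform of $\phi(|\cdot|)$ on $\R^{2m+1}$. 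This is where $\supp\phi\subset[0,\pi]$ is used. Hence $(-1)^mG^{(m)}\ge0$.
\item By the moment identities, the functional $G\mapsto\sum_j\beta_jG(u_j)$ annihilates polynomials of degree $<m$ in $u$. So it is a multiple of the divided difference $[u_0,\dots,u_m]$. The multiple is $\sum_j\beta_ju_j^m=(-1)^m(2m)!\,C_n^m(1)$, read off from the $\theta^{2m}$ Taylor coefficient.
\item The B-spline (Peano kernel) formula $[u_0,\dots,u_m]G=\frac1{m!}\int G^{(m)}M$ with $M\ge0$ then gives
\[
 a_n=\frac{(2m)!\,C_n^m(1)}{m!}\int_{n^2}^{(n+2m)^2}(-1)^mG^{(m)}(u)\,M(u)\,du\ge0 .
\]
\end{enumerate}

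Your intuition about ``iterated differences'' was right, but they are divided differences in $k^2$, not differences in $k$. For $m=2$, $8(\beta_0,\beta_1,\beta_2)=(n+3,\,-2n-4,\,n+1)$. This also supplies exactly the positive measure you were looking for: $d\mu_n(s)\propto M(s^2)\,s\,ds$ on $[n,n+2m]$.

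The paper gives no proof of this statement; it quotes it from Nie and Ma, with a second proof by Feng and Ge. So your proposal has to supply this step itself.
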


The remaining question concerns the even-dimensional classes.

\begin{problem}[Even dimensions]\label{prob:even}
Let $d\ge2$ be even, let $0<\theta\leq \pi$, and let $\phi\in\Phi_d$. If $\supp\phi\subset[0,\theta]$, is it necessarily true that $\phi|_{[0,\pi]}\in\Psi_d$?
\end{problem}

\begin{remark}
The endpoint \(\theta=\pi\) is precisely Gneiting's original even-dimensional problem, whereas \(0<\theta<\pi\) asks whether the conclusion might hold under a stricter support restriction.
\end{remark}
Several results give affirmative answers for special families. For example, the truncated powers
$$
 f_{\theta,\delta}(t):=(\theta-t)_+^\delta
$$
are positive definite on spheres in the relevant range; see \cite{BeatsonCastellXu,FengGe,Lu2025}. These results do not, however, settle Problem~\ref{prob:even} for the full class $\Phi_d$.

In even dimensions a negative Gegenbauer coefficient may occur even when the Euclidean positive definite function has arbitrarily small support. Our first result gives an explicit example in dimension two.

\begin{theorem}\label{thm:d2}
There exists a continuous $f\in\Phi_2$ such that $\supp f\subset[0,31/10]\subset[0,\pi)$, and
$$
 \int_0^{31/10} f(t)P_2(\cos t)\sin t\dd t<0,
$$
where $P_2(x)=(3x^2-1)/2$ is the Legendre polynomial of degree two. In particular, the unchanged geodesic version of $f$ does not belong to $\Psi_2$.
\end{theorem}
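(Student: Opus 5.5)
The plan is to build $f$ as a Euclidean autocorrelation, so that $f\in\Phi_2$ holds automatically. The sign of the Legendre coefficient then becomes the sign of a small quadratic form, which can be made negative by tuning parameters.

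\emph{Transferring the coefficient to $\R^2$.} Write the target quantity as
\[
I(f)=\int_0^{31/10} f(t)P_2(\cos t)\sin t\dd t=\frac1{2\pi}\int_{\R^2} f(|x|)\,K(|x|)\dd x,
\qquad K(r):=P_2(\cos r)\,\frac{\sin r}{r}.
\]
For $f(|x|)=(g*\wt g)(x)$ with $g$ a radial finite measure, this gives $I(f)=\frac1{2\pi}\iint K(|x-y|)\dd g(x)\dd g(y)$. Here $\wt g(x):=\overline{g(-x)}$. Such an $f$ is positive definite on $\R^2$, and after normalizing $f(0)=1$ it lies in $\Phi_2$. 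Since $\supp f$ is contained in $\supp g-\supp g$, choosing $\supp g\subset \overline{B_{a}}$ gives $\supp f\subset[0,2a]$.

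\emph{Two-parameter ansatz.} I will take $g=c_1\delta_0+c_2\sigma_a$, where $\sigma_a$ is the normalized arc-length measure on the circle $|x|=a$ and $a$ is slightly below $3/2$. The quadratic form is then
\[
Q(c_1,c_2)=c_1^2K(0)+2c_1c_2K(a)+c_2^2C_a,
\qquad C_a:=\iint K(|x-y|)\dd\sigma_a(x)\dd\sigma_a(y).
\]
Here $K(0)=1$ and $K(a)=P_2(\cos a)\sin a/a$. For $a$ near $\pi/2$ we have $P_2(\cos a)\approx-1/2$, so $K(a)\approx-1/(2a)$ is negative. The form is indefinite as soon as $K(a)^2>C_a$ (or $C_a<0$). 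The explicit integral is
\[
C_a=\frac1\pi\int_0^\pi K\bigl(2a\sin(\psi/2)\bigr)\dd\psi,
\]
which averages $K$ over chord lengths in $[0,2a]$. Those lengths sample the region where $P_2(\cos r)<0$, namely $r\in(0.955,2.186)$, with substantial weight. So I expect $C_a$ to be small or even negative, and then $c_1,c_2>0$ with a suitable ratio make $Q<0$.

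\emph{From measures to a continuous $f$.} Replace $g$ by $g_\eps=g*\chi_\eps$, where $\chi_\eps$ is a smooth radial bump supported in $B_\eps$. Then $f_\eps=g_\eps*\wt g_\eps$ is continuous and positive definite, with support in $[0,2a+2\eps]$. The corresponding quadratic form converges to $Q(c_1,c_2)$ as $\eps\to0$, because $K$ is continuous. Choosing $a$ and $\eps$ with $2a+2\eps\le 31/10$ (for instance $a=1.5$, $\eps=0.05$) and $\eps$ small enough keeps $I(f_\eps)<0$.

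The conclusion that the geodesic version is not in $\Psi_2$ then follows from Schoenberg's characterization. Membership in $\Psi_2$ requires every Legendre coefficient $\int_0^\pi f(t)P_n(\cos t)\sin t\dd t$ to be nonnegative, and here the support lies inside $[0,\pi)$, so the $n=2$ coefficient is exactly $I(f_\eps)<0$.

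\emph{Main obstacle.} The hard step is the rigorous numerical verification of $K(a)^2>C_a$. I would first compute $C_a$ at $a=1.5$ by careful quadrature with interval error bounds on $\psi\mapsto K(2a\sin(\psi/2))$, which is smooth with explicitly boundable derivatives. A second step is to quantify the $\eps$-perturbation, using a Lipschitz bound for $K$ on $[0,\pi]$, so that the strict inequality survives smoothing. If the margin turns out too thin, I would enlarge the ansatz, for example by adding a second circle $\sigma_b$. This yields a $3\times3$ form, and I would exhibit an explicit vector with negative value.
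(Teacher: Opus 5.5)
Your route is different from the paper's, and it works. The one step you leave open is true, with a comfortable margin.

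\textbf{How the two routes differ.} The paper takes $g$ to be a signed combination of three disk indicators with rational radii, so $g*\wt g$ is already continuous and no smoothing is needed. It then passes to the Fourier--Bessel side and writes the coefficient as $\frac{\pi}{4}(9A_3-A_1)$, where $A_a=\int_0^{\pi/2}G(a\sin u)^2\sin u\dd u$. The sign is certified by a $17$-term exact rational series plus a tail bound. You instead stay in physical space and test the kernel $K(|x-y|)$ on the $2\times 2$ Gram matrix of $\delta_0$ and $\sigma_a$. This reduces everything to the single scalar inequality $C_a<K(a)^2$.

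\textbf{The open inequality holds.} It has a closed form. Since $K(r)=(3\sin 3r-\sin r)/(8r)$ and $\int_0^{\pi/2}\frac{\sin(z\sin u)}{\sin u}\dd u=\frac{\pi}{2}\int_0^z J_0(t)\dd t$, one gets
\[
 C_a=\frac{1}{16a}\Bigl(3\int_0^{6a}J_0(t)\dd t-\int_0^{2a}J_0(t)\dd t\Bigr).
\]
At $a=3/2$ this gives $C_{3/2}\approx 0.0987$, while $K(3/2)^2\approx 0.1073$. Hence $C_{3/2}-K(3/2)^2\approx -0.0085$, and $(c_1,c_2)=(-K(3/2),1)$ gives $Q<0$. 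So $C_a$ is positive, not negative, but it is small enough. The two Bessel integrals can be certified by short alternating power series. That is the same kind of exact argument as the paper's, with far less arithmetic.

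On the Fourier side the mechanism is the $m=1$ case of the paper's general construction. Your $\wh g(\rho)=c_1+c_2J_0(3\rho/2)$ is large on $(0,1)$, where $h_1<0$. It nearly vanishes at $\rho=3$, where $h_1$ is singular.

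\textbf{The cost: mollification.} The atom at the origin forces you to mollify. For the existence statement the qualitative limit is enough: fix $a=3/2$, take $\eps\le 1/20$, and use uniform continuity of $K$. No Lipschitz estimate is needed.

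If you want an explicit $\eps$, the first-order bound you propose is too weak at $\eps=1/20$. It gives a change of at most $2\,\mathrm{Lip}(K)\,\eps\,(|c_1|+|c_2|)^2\approx 5\eps$, which exceeds the margin $0.0085$. You would need $\eps$ of order $10^{-3}$, or a second-order bound that uses the symmetry of the mollifier.

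The paper's step-function seed avoids this issue entirely. It also yields an explicit rational $f$ with an elementary disk-intersection formula. You could get the same benefit by replacing $\delta_0$ and $\sigma_a$ with a small disk and a thin annulus.
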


\begin{remark}
In the proof, all radii and coefficients in this construction are rational. The negative sign is certified by an everywhere convergent rational series with an explicit tail bound, so numerical quadrature is not used in the proof.
Exact arithmetic is useful here because the series exhibits substantial cancellation.
\end{remark}

The next main result shows that this failure persists in every even dimension under every prescribed positive support bound.

\begin{theorem}\label{thm:main}
Let $d=2m$, where $m\ge1$, and let $0<\theta\le\pi$. There exists a nonzero real isotropic positive definite function $F\in C_c^\infty(\R^{2m})$ such that $F(0)>0$, $\supp F\subset B_\theta$, and its normalized univariate representative
$$
 f(r):=\frac{F(re_1)}{F(0)},\qquad r\ge0,
$$
belongs to $\Phi_{2m}$ but satisfies
\begin{equation}\label{eq:main-coeff}
 \int_0^\theta f(t)R_2^{m-\frac12}(\cos t)(\sin t)^{2m-1}\dd t<0.
\end{equation}
Here $R_2^{m-\frac12}$ denotes the normalized Gegenbauer polynomial defined in Section~2.1. Consequently $f|_{[0,\pi]}\notin\Psi_{2m}$.
\end{theorem}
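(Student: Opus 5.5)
The plan is to reduce the statement to a quadratic-form question on a small Euclidean ball, and then to choose a suitable Euclidean positive definite $F$. I expect this to work for every $\theta$ at once, because shrinking $\theta$ only restricts the admissible $F$.

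\textbf{Step 1 (reformulation).} Put $\lambda=m-\tfrac12$. Then $R_2^{\lambda}(x)=\bigl((2\lambda+2)x^2-1\bigr)/(2\lambda+1)$, and I define the radial weight
$$
w(x):=\Bigl(\frac{\sin|x|}{|x|}\Bigr)^{2m-1}R_2^{\lambda}(\cos|x|),\qquad x\in\R^{2m}.
$$
For $F$ radial with $\supp F\subset B_\theta$, polar coordinates turn the left side of \eqref{eq:main-coeff} into $c_m F(0)^{-1}\int_{\R^{2m}}F(x)w(x)\dd x$ with $c_m>0$. I will take $F=G*\wt G$, where $\wt G(x)=\overline{G(-x)}$ and $G\in C_c^\infty(B_{\theta/2})$ is radial and real. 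This gives $\wh F=|\wh G|^2\ge0$, $\supp F\subset B_\theta$, and $F(0)=\|G\|_2^2>0$, so $f\in\Phi_{2m}$ by Bochner's theorem. The integral becomes the quadratic form
$$
Q(G)=\iint w(x-y)\,G(x)G(y)\dd x\dd y .
$$
Hence the statement reduces to showing that the kernel $w(x-y)$ is not positive semidefinite on $B_{\theta/2}\times B_{\theta/2}$. The final claim $f|_{[0,\pi]}\notin\Psi_{2m}$ then follows from Schoenberg's characterization, since a degree-two Gegenbauer coefficient of $f$ is negative.

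\textbf{Step 2 (what does not suffice).} The first idea is to rescale $G$ to tiny support and kill low moments of $G$. Then $Q(G)$ is governed by a Taylor coefficient $w_k$ of $w$ in $|x|^2$, multiplied by $\int|x|^{2k}F$, which has sign $(-1)^k$. Negativity would therefore need $(-1)^kw_k<0$ for some $k$. However, $w(it)=(\sinh t/t)^{2m-1}R_2^\lambda(\cosh t)$ has nonnegative Taylor coefficients, so this purely local moment argument produces no negative term.

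\textbf{Step 3 (intended mechanism).} The negativity must come from a genuinely nonlocal, dimension-parity effect. I would write
$$
Q(G)=\int_{\R^{2m}}|\wh G(\xi)|^2\,\wh{(w\chi)}(\xi)\dd\xi ,
$$
where $\chi$ is any cutoff equal to $1$ on $B_\theta$. I would then study the radial Fourier transform $\int_0^\theta w(t)\,t^{2m-1}\Omega_{2m}(\rho t)\dd t$, with $\Omega_{2m}$ the normalized Bessel kernel, together with the dependence on the choice of extension. By Hahn--Banach/Krein-type duality, $Q\ge0$ on all admissible $G$ if and only if $w|_{B_\theta}$ admits a positive definite extension to $\R^{2m}$. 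So the core claim is that no such extension exists in even dimension.

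For odd $d$ the factor $(\sin t)^{d-1}$ is an even power, so the weight is a trigonometric polynomial and Theorem~\ref{thm:odd} says an extension does exist. For $d=2m$ the odd power $(\sin t)^{2m-1}$ should obstruct extension. I would try to exhibit a test function $G$ concentrating $|\wh G|^2$ at large $|\xi|$ in a thin shell, chosen so that the Bessel phase $\rho t-\tfrac{(2m-1)\pi}{4}$ (half-integer shift in even dimension) makes the oscillatory part dominate with the wrong sign.

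\textbf{Main obstacle.} The hard step is certifying $Q(G)<0$ for a concrete $G$ uniformly in $\theta$, since Step~2 shows that simple scaling and moment constructions fail. As a fallback I would first do $m=1$ with explicit rational radii, mirroring Theorem~\ref{thm:d2}, and then transfer to general $m$ and small $\theta$ by a dilation-plus-perturbation argument.
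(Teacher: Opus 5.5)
\textbf{Verdict: there is a genuine gap.} Your Step~1 is correct and is the paper's starting point: the pairing identity \eqref{eq:pairing} together with an autocorrelation $F=g*\wt g$. But you never supply an argument that actually makes $Q(G)<0$. The Hahn--Banach/Krein extension reformulation only restates the problem, and the thin high-frequency shell with a Bessel phase is a hope rather than a mechanism. It also points the wrong way.

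The missing idea is to Fourier transform the \emph{uncut} weight $w=Q_m$ as a tempered distribution. The paper proceeds as follows, with $\nu=m-1$:
\begin{enumerate}[label=(\roman*)]
\item Write $K_m(t)=R_2^{m-\frac12}(\cos t)(\sin t)^{2m-1}=\sum_a q_{m,a}\sin(at)$, a finite sine polynomial in odd frequencies $a\le 2m+1$.
\item Regularize by $e^{-\eps|x|}$ and evaluate the resulting Laplace--Bessel integrals in closed form.
\item Use the cancellations $\sum_a q_{m,a}a^j=0$ for $j=1,3,\dots,2m-3$ and $\sum_a q_{m,a}/a=0$.
\end{enumerate}
This gives $\wh Q_m(\xi)=(2\pi)^mh_m(|\xi|)$, where $h_m$ is an explicit integrable function vanishing for $|\xi|>2m+1$. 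A possible correction supported at $\xi=0$ is excluded via Lemma~\ref{lem:point-support} and the decay $Q_m\to0$ at infinity. The only surviving term is the boundary value $-\sin\bigl(\pi(\nu-\tfrac12)\bigr)(a^2-\rho^2)_+^{\nu-\frac12}$. It is nonzero precisely because the exponent is a half-integer in even dimension. That, and not a phase shift in Bessel asymptotics, is the parity effect. Expanding at the origin and using $\sum_a q_{m,a}a^{-3}=-W_m/(2m(2m+1))$ gives $h_m(\rho)=-c_m\rho^2+O(\rho^4)$ with $c_m>0$. So the negativity lives at low frequencies $0<|\xi|<\eps_m$, and there is nothing at all beyond $|\xi|=2m+1$. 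Mass of $|\wh G|^2$ placed in a high shell only meets uncontrolled tails.

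Your Step~2 also draws the wrong conclusion. What it shows is that each individual moment $\int|\xi|^{2k}h_m(|\xi|)\dd\xi$ is positive, so no single leading monomial produces a negative sign. But $h_m$ has compact support and changes sign. Hence the quadratic form $P\mapsto\int P(|\xi|^2)^2h_m(|\xi|)\dd\xi$, over polynomials of unbounded degree, is indefinite even though all its moments are positive.

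The paper's Lemma~\ref{lem:localize} uses exactly this. Take a radial mollifier $\eta_\delta$ supported in $B_\delta$ with $\delta<\theta/2$, and set $g=P(-\Delta)\eta_\delta$. Choose $P$ by Weierstrass so that $P(|\xi|^2)\wh\eta_\delta(\xi)$ approximates, uniformly on a ball containing the support of $h_m(|\cdot|)$, a bump supported in an annulus $\{r_0<|\xi|<r_1\}$ with $r_1<\eps_m$. Then $\int FQ_m=(2\pi)^{-m}\int|\wh g|^2h_m<0$ and $\supp F\subset B_{2\delta}\subset B_\theta$, for every $\theta$ at once.

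Your fallback cannot work either. The weight is not dilation invariant, and by your own Step~2 any fixed profile rescaled to small support gives $Q>0$ to leading order. So the explicit two-dimensional example, with support $31/10$, cannot be shrunk. Nor does a counterexample in $\Phi_2$ transfer to $\Phi_{2m}$: the inclusion $\Phi_{2m}\subset\Phi_2$ goes the wrong way, and the weight $(\sin t)^{2m-1}R_2^{m-\frac12}(\cos t)$ changes with $m$.
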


\begin{remark}\label{rem:main-strict}
The function obtained in the proof is the autocorrelation of a nonzero compactly supported function. Lemma~\ref{lem:autocorr} therefore shows that it is, in fact, strictly positive definite.
\end{remark}

The paper is organized as follows. Section~2 recalls the Euclidean and spherical classes, Schoenberg's criterion, and an autocorrelation lemma. Section~3 gives the explicit two-dimensional counterexample and its exact rational sign certificate.  Section~4 proves Theorem~\ref{thm:main} and compares the general construction with the two-dimensional example. Section~5 records the support-radius consequences. The appendices provide a geometric verification in dimension two, exact sine coefficients, low-dimensional checks, and the exact rational certificate used in Section~3.

Throughout this paper, $C$ denotes a positive constant whose value may change from line to line. We write $\N_0:=\{0,1,2,\dots\}$. For $x\in\R^d$, $|x|$ denotes the Euclidean norm. The notation $\|u\|_{L^p}$ denotes the usual $L^p$ norm. We write $\supp f$ for the support of $f$ and $\one_E$ for the indicator of a set $E$. We use ``isotropic'' for the positive definite functions under study; ``radial'' is reserved for auxiliary Euclidean functions and Fourier formulas that depend only on the Euclidean norm. For a function $g$ on $\R^d$, we write
$$
 \wt g(x):=\overline{g(-x)}.
$$

\section{Preliminary results and lemmas}

\subsection{Euclidean and spherical positive definite functions}
The definition of positive definiteness is given in \eqref{eq:pd-def}. It is called strict if the inequality is strict for every nonzero coefficient vector whenever the points $x_1,\dots,x_N$ are pairwise distinct. Thus ``positive definite'' means positive semidefinite throughout.

We follow Gneiting's notation \cite{Gneiting2013}. For $d\ge1$, let $\Phi_d$ be the class of continuous functions $\phi:[0,\infty)\to\R$, normalized by $\phi(0)=1$, such that
$$
 (x,y)\longmapsto\phi(|x-y|),\qquad x,y\in\R^d,
$$
is positive definite. Let $\Psi_d$ be the class of continuous functions $\psi:[0,\pi]\to\R$, $\psi(0)=1$, such that
$$
 (\xi,\eta)\longmapsto\psi\bigl(\rho_{\Sph^d}(\xi,\eta)\bigr),
 \qquad \rho_{\Sph^d}(\xi,\eta):=\arccos(\xi\cdot\eta),
$$
is positive definite on $\Sph^d\subset\R^{d+1}$.
By restriction to a coordinate subspace or to a great subsphere, respectively,
these classes satisfy
$$
 \Phi_{d+1}\subset\Phi_d,
 \qquad
 \Psi_{d+1}\subset\Psi_d,
 \qquad d\ge1.
$$
Multiplication by a positive constant does not affect either the non-strict or strict property. For $R>0$ and $k\ge1$, write
$$
 B_R^{(k)}:=\{x\in\R^k:|x|\le R\},
$$
and simply $B_R$ when the ambient dimension is clear.

Bochner's theorem \cite{Bochner} characterizes translation-invariant positive definite functions on $\R^d$ as Fourier transforms of nonnegative finite measures. Schoenberg's theorem \cite{Schoenberg} gives the corresponding characterization on $\Sph^d$. For $d\ge2$, put
$$
 \lambda:=\frac{d-1}{2},\qquad R_n^\lambda(t):=\frac{C_n^\lambda(t)}{C_n^\lambda(1)},
$$
where $C_n^\lambda$ is the Gegenbauer polynomial of degree $n$. For $d=1$ we use the standard limiting convention
$$
 R_n^0(\cos\theta):=\cos(n\theta),\qquad n\ge0.
$$
See \cite[Section 4.7]{Szego} for the standard normalization and orthogonality properties of the
Gegenbauer polynomials.
A continuous function $\psi:[0,\pi]\to\R$ belongs to $\Psi_d$ if and only if it has an expansion
$$
 \psi(\theta)=\sum_{n=0}^\infty a_{d,n}(\psi)R_n^\lambda(\cos\theta),
 \qquad a_{d,n}(\psi)\ge0,
$$
with $\sum_{n\ge0}a_{d,n}(\psi)=\psi(0)=1$. If $\psi\in\Psi_d$, this
series converges uniformly on $[0,\pi]$. The standard bound
$|R_n^\lambda(t)|\le1$ for $-1\le t\le1$ \cite[Section~4.7]{Szego},
together with $\sum_n a_{d,n}(\psi)=1$, gives uniform convergence (and
$|\cos(n\theta)|\le1$ gives the same conclusion when $d=1$). Termwise
integration against the Gegenbauer weight is therefore justified. For such
$\psi$, Gegenbauer orthogonality (Fourier cosine orthogonality when $d=1$)
gives
$$
 a_{d,n}(\psi)=\kappa_{d,n} I_{d,n}(\psi),
 \qquad
 \kappa_{d,n}:=
 \left(\int_0^\pi\bigl[R_n^\lambda(\cos t)\bigr]^2
 (\sin t)^{d-1}\dd t\right)^{-1}>0,
$$
where
\begin{equation}\label{eq:schoenberg-sign}
 I_{d,n}(\psi):=\int_0^\pi \psi(t)R_n^\lambda(\cos t)(\sin t)^{d-1}\dd t.
\end{equation}
Consequently, if $\psi\in\Psi_d$, then $I_{d,n}(\psi)\ge0$ for every $n\ge0$.
Thus a single negative integral in \eqref{eq:schoenberg-sign} implies, by contraposition,
$\psi\notin\Psi_d$. When $d=2$, $R_n^{1/2}=P_n$, the Legendre polynomial.

\subsection{Chordal restriction and Gneiting's Problem}
Restricting a Euclidean isotropic positive definite function on $\R^{d+1}$ to $\Sph^d$ produces the chordal-distance function
$$
 \theta\longmapsto\phi\left(2\sin\frac\theta2\right),
$$
because $|\xi-\eta|=2\sin(\rho_{\Sph^d}(\xi,\eta)/2)$. The problem considered here is instead the direct geodesic substitution $\theta\mapsto\phi(\theta)$.

Theorem~\ref{thm:odd} settles the latter question in odd dimensions, while Theorem~\ref{thm:main} answers Problem~\ref{prob:even} negatively at every positive support radius. For $d=2$, Schoenberg's theorem makes the question especially concrete: if $\phi(0)=1$, does one necessarily have
\begin{equation}\label{eq:d2-question}
 \int_0^\theta \phi(t)P_n(\cos t)\sin t\dd t\ge0,
 \qquad n=0,1,2,\dots?
\end{equation}
Theorem~\ref{thm:d2} supplies an exact counterexample to \eqref{eq:d2-question}.

\subsection{Autocorrelations and strict positive definiteness}

\begin{lemma}[Autocorrelations]\label{lem:autocorr}
Let $g\in L^2(\R^d)$ be nonzero, real-valued, and compactly supported, and put $F:=g*\wt g$. Then $F$ is a real-valued strictly positive definite function on $\R^d$.
\end{lemma}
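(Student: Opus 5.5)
The plan is to use Plancherel: positive definiteness becomes positivity of $|\wh g|^2$, and strictness follows from the fact that a nonzero compactly supported function has a real-analytic Fourier transform.

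\emph{Step 1: $F$ is real, continuous and compactly supported.} Since $g\in L^2$ has compact support, Cauchy--Schwarz shows that
\[
F(x)=\int_{\R^d} g(y)\,g(y-x)\dd y
\]
is real-valued and bounded, and that it is supported in $\supp g-\supp g$. Continuity follows because translation is continuous in $L^2$: $|F(x)-F(x')|\le\|g\|_{L^2}\|g(\cdot-x)-g(\cdot-x')\|_{L^2}$.

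\emph{Step 2: the quadratic form.} Take distinct points $x_1,\dots,x_N$ and coefficients $c_1,\dots,c_N\in\C$. Write $h:=\sum_j c_j\,g(\cdot-x_j)$, which lies in $L^2$. Expanding $\|h\|^2$ gives
\[
\sum_{i,j}\overline{c_i}c_j\,F(x_i-x_j)=\sum_{i,j}\overline{c_i}c_j\int g(y-x_i)\,g(y-x_j)\dd y=\|h\|_{L^2}^2\ge0 ,
\]
up to the sign convention on $x_i-x_j$ versus $x_j-x_i$. That convention does not matter, because $F(-x)=F(x)$: $F$ is real and $\wt F=F$. Since $g$ is real, $\wh g(-\xi)=\overline{\wh g(\xi)}$, so $|\wh g|^2$ is even; this gives the same symmetry from the Fourier side. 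Equivalently, by Plancherel,
\[
\sum_{i,j}\overline{c_i}c_j\,F(x_i-x_j)=\int_{\R^d}|\wh g(\xi)|^2\,\Bigl|\sum_j c_j e^{-i\xi\cdot x_j}\Bigr|^2\dd\xi .
\]
This already proves positive definiteness. It also matches Bochner's theorem, since $\wh F=|\wh g|^2\ge0$ is integrable.

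\emph{Step 3: strictness, the main point.} Suppose the form vanishes. Then $|\wh g|^2\,|p|^2=0$ almost everywhere, where $p(\xi)=\sum_j c_je^{-i\xi\cdot x_j}$.
\begin{itemize}
\item Because $g$ is nonzero and compactly supported, $g\in L^1$, and $\wh g$ extends to an entire function on $\C^d$ by Paley--Wiener. It is not identically zero, so its real zero set is a proper real-analytic subvariety and has Lebesgue measure zero.
\item Hence $p=0$ almost everywhere, and since $p$ is continuous, $p\equiv0$.
\item Exponentials $e^{-i\xi\cdot x_j}$ with distinct frequencies $x_j$ are linearly independent. To see this, choose a direction $v$ that separates the $x_j\cdot v$. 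Restricting to the line $\xi=tv$ reduces the claim to the one-dimensional independence of $e^{-it a_j}$ with distinct $a_j$. That in turn follows, for example, from a Vandermonde argument on derivatives at $t=0$, or from averaging $p(tv)e^{ita_k}$ over $t\in[-T,T]$ and letting $T\to\infty$.
\item Therefore all $c_j=0$, which is strict positive definiteness.
\end{itemize}

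The only nonroutine ingredient is the measure-zero statement for the zero set of $\wh g$. I would justify it through analyticity: a real-analytic function that is not identically zero on $\R^d$ has a null zero set, which follows by induction on $d$ using Fubini.
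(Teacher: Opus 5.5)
Your proof is correct, but the key strictness step takes a different route from the paper's. Both arguments show that vanishing of $\int|\wh g|^2|p|^2$ forces the exponential polynomial $p(\xi)=\sum_j c_je^{-ix_j\cdot\xi}$ to vanish identically. Both then finish with the same separating direction $v$ and Vandermonde argument. The difference is which factor carries the analyticity.

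\begin{itemize}
\item \emph{The paper:} it uses only that $\wh g$ is continuous and not identically zero, so $|\wh g|\ge c_U>0$ on some open ball $U$. Then $p=0$ on $U$, and since $p$ is plainly real-analytic (indeed entire), the identity theorem gives $p\equiv0$.
\item \emph{Your proof:} you make $\wh g$ analytic via Paley--Wiener and invoke the lemma that a nonzero real-analytic function has a null zero set. After that, continuity of $p$ alone suffices.
\end{itemize}

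The paper's route is lighter: it needs only the identity theorem for an explicit exponential sum, not the measure-zero lemma. It also shows that compact support plays no role in strictness, since the argument works verbatim for any nonzero $g\in L^1\cap L^2$. Your route uses compact support essentially, but in return it gives the extra fact that $\wh F=|\wh g|^2>0$ almost everywhere.

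Two minor points. Your Gram identity $\|h\|_{L^2}^2$ for non-strict positive definiteness is the same one the paper uses in Lemma~\ref{lem:d2-basic}. Under the paper's Fourier convention, your Plancherel display carries an extra factor $(2\pi)^{-d}$, which is harmless.
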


\begin{proof}
Let $K$ be a compact set containing $\supp g$. By Cauchy--Schwarz,
$$
 \|g\|_1=\int_K|g(x)|\dd x\le |K|^{1/2}\|g\|_2<\infty,
$$
so $g\in L^1(\R^d)\cap L^2(\R^d)$. Moreover,
$$
 F(x)=\int_{\R^d}g(y)\overline{g(y-x)}\dd y,
$$
so continuity of translations in $L^2$ shows that $F$ is continuous. The convolution theorem and Plancherel's theorem \cite{SteinWeiss} give
$$
 \wh F(\xi)=|\wh g(\xi)|^2\ge0.
$$
Let $x_1,\dots,x_N$ be distinct and $c_1,\dots,c_N\in\C$. Fourier inversion gives
\begin{equation}\label{eq:strict-quad}
 \sum_{i,j=1}^N\overline{c_i}c_jF(x_i-x_j)
 =(2\pi)^{-d}\int_{\R^d}\left|\sum_{i=1}^N c_i e^{-ix_i\cdot\xi}\right|^2|\wh g(\xi)|^2\dd\xi.
\end{equation}
The Fourier transform $\wh g$ is continuous and cannot vanish identically, since the Fourier transform is injective on $L^1(\R^d)$ \cite{SteinWeiss}. Hence there are an open ball $U$ and a constant $c_U>0$ such that $|\wh g(\xi)|\ge c_U$ on $U$.

Suppose the quadratic form in \eqref{eq:strict-quad} is zero. Its integrand is nonnegative, so the exponential polynomial
$$
 p(\xi):=\sum_{i=1}^N c_i e^{-ix_i\cdot\xi}
$$
vanishes almost everywhere on $U$, hence everywhere on $U$ by continuity. The function $p$ is real analytic, so the identity theorem on the connected set $\R^d$ yields $p\equiv0$.

Choose $v\in\R^d$ outside the finitely many hyperplanes
$$
 \{v:(x_i-x_j)\cdot v=0\},\qquad 1\le i<j\le N.
$$
Then $\tau_i:=x_i\cdot v$ are pairwise distinct (the points $x_i$ are distinct), and
$$
 \Lambda(t):=\sum_{i=1}^N c_i e^{-i\tau_i t}=0,\qquad t\in\R.
$$
For $k=0,\dots,N-1$, differentiation at $t=0$ gives
$$
 0=\Lambda^{(k)}(0)=\sum_{i=1}^N c_i(-i\tau_i)^k.
$$
The coefficient matrix is Vandermonde, with determinant
$$
 (-i)^{N(N-1)/2}\prod_{1\le i<j\le N}(\tau_j-\tau_i)\ne0.
$$
Thus $c_1=\cdots=c_N=0$, proving strict positive definiteness.
\end{proof}

\section{An explicit two-dimensional construction}

In this section, we prove Theorem \ref{thm:d2}. We first construct a signed radial step
function on $\mathbb{R}^2$ with three rational radii and take its autocorrelation. The resulting
function is positive definite by construction. We then reduce its degree-two spherical
coefficient to Fourier--Bessel integrals and determine the sign by an exact rational
series with an explicit tail bound.

\subsection{The radial seed and its Euclidean autocorrelation}
For $R>0$, let $B_R:=\{x\in\R^2:|x|\le R\}$ and define
\begin{equation}\label{eq:seed}
 g:=-5\one_{B_{7/10}}+5\one_{B_{6/5}}-3\one_{B_{31/20}}.
\end{equation}
Equivalently,
$$
 g(x)=
 \begin{cases}
 -3,&0\le|x|<7/10,\\
 2,&7/10\le|x|<6/5,\\
 -3,&6/5\le|x|<31/20,\\
 0,&|x|\ge31/20.
 \end{cases}
$$
We use the convolution convention
$$
 (u*v)(x):=\int_{\R^2}u(y)v(x-y)\dd y.
$$
Since $g$ is real and radial, $\wt g=g$. Define
\begin{equation}\label{eq:F2-def}
 F:=g*\wt g,\qquad F_0(r):=F(re_1),\qquad
 f(r):=\frac{F_0(r)}{F(0)},\qquad e_1=(1,0).
\end{equation}

\begin{lemma}\label{lem:d2-basic}
The function $f$ defined by \eqref{eq:F2-def} belongs to $\Phi_2$, satisfies $f(0)=1$, and is supported in $[0,31/10]$.
\end{lemma}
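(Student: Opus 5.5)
The plan is to apply Lemma~\ref{lem:autocorr} to the seed $g$ from \eqref{eq:seed}, and then settle the three remaining claims: $F(0)>0$, radiality of $F$, and the support bound.

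First, $g$ is a finite linear combination of indicators of closed discs. It is therefore bounded, real-valued, compactly supported, and lies in $L^2(\R^2)$. It is nonzero: for instance $g=-3$ on $B_{7/10}$. Lemma~\ref{lem:autocorr} then shows that $F=g*\wt g$ is a continuous, real-valued, (strictly) positive definite function on $\R^2$. In addition,
$$
F(0)=\int_{\R^2}g(y)^2\dd y=\|g\|_{L^2}^2>0,
$$
because $g\ne0$ on a set of positive measure. The normalization $f=F_0/F(0)$ is therefore well defined and gives $f(0)=1$. Multiplying by the positive constant $1/F(0)$ preserves positive definiteness.

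Second, to show that $f\in\Phi_2$, I need $F(x)$ to depend only on $|x|$, so that $F(x-y)=F_0(|x-y|)$ and $f$ is a genuine univariate representative. The seed $g$ is radial, so $g\circ A=g$ for every orthogonal $A$. The substitution $y\mapsto Ay$, which has unit Jacobian, gives
$$
F(Ax)=\int g(y)g(y-Ax)\dd y=\int g(Ay)g(A(y-x))\dd y=F(x).
$$
Hence $F(x)=F(|x|e_1)=F_0(|x|)$. Continuity of $F_0$ on $[0,\infty)$ follows from continuity of $F$. The positive definiteness inequality \eqref{eq:pd-def} for $\phi=f$ with the Euclidean distance is exactly the positive definiteness of $F/F(0)$, so $f\in\Phi_2$.

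Third, for the support claim, $\supp g\subset B_{31/20}$, so $\supp\wt g\subset B_{31/20}$. The integrand $g(y)g(y-x)$ vanishes unless both $|y|\le 31/20$ and $|y-x|\le31/20$, which forces $|x|\le 31/10$. Thus $F(x)=0$ for $|x|>31/10$, and $\supp f\subset[0,31/10]$.

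None of these steps is a real obstacle. The only point needing care is the radial-invariance argument, which justifies treating $F$ as a function of $|x-y|$ alone.
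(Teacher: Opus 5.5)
Your proof is correct and follows the paper's argument essentially step by step: $F(0)=\|g\|_2^2>0$, radiality of $F$ from the rotation invariance of $g$, and the support bound from $\supp g\subset B_{31/20}$. The only difference is cosmetic. The paper proves positive definiteness directly from the Gram identity $\sum_{i,j}\overline{z_i}z_jF(x_i-x_j)=\int_{\R^2}\bigl|\sum_i z_ig(y+x_i)\bigr|^2\dd y$ and gets continuity from continuity of translations in $L^2$, using Lemma~\ref{lem:autocorr} only for strictness; you take positive definiteness and continuity from Lemma~\ref{lem:autocorr}, whose proof does establish both.
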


\begin{proof}
For $x_1,\dots,x_N\in\R^2$ and $z_1,\dots,z_N\in\C$,
$$
 \sum_{i,j=1}^N\overline{z_i}z_jF(x_i-x_j)
 =\int_{\R^2}\left|\sum_{i=1}^N z_i g(y+x_i)\right|^2\dd y\ge0.
$$
Thus $F$ is positive definite; Lemma~\ref{lem:autocorr} shows that it is in fact strictly positive definite. Since $g\in L^2(\R^2)$,
$$
 F(x)=\langle g(\cdot+x),g\rangle_{L^2},
$$
so continuity follows from continuity of translations in $L^2$. Orthogonal invariance follows from the radiality of $g$, hence $F$ is radial. Moreover $\supp g\subset B_{31/20}$, so $F(x)=0$ for $|x|>31/10$. Finally,
$$
 F(0)=\|g\|_2^2>0,
$$
and the normalization in \eqref{eq:F2-def} is valid.
\end{proof}

Using the annular values of $g$,
\begin{equation}\label{eq:F0-value}
\begin{aligned}
 F(0)
 &=\pi\left[9\left(\frac7{10}\right)^2
 +4\left(\left(\frac65\right)^2-\left(\frac7{10}\right)^2\right)
 +9\left(\left(\frac{31}{20}\right)^2-\left(\frac65\right)^2\right)\right]\\
 &=\frac{6749\pi}{400}.
\end{aligned}
\end{equation}

\subsection{Reduction to a Fourier--Bessel integral}
Since $P_2(x)=(3x^2-1)/2$, the triple-angle identity gives
$$
 P_2(\cos r)\sin r=\frac{3\sin(3r)-\sin r}{8}.
$$
For $a>0$, put
$$
 S_a:=\int_0^\infty F_0(r)\sin(ar)\dd r,
 \qquad I:=\int_0^\infty F_0(r)P_2(\cos r)\sin r\dd r.
$$
Then
\begin{equation}\label{eq:I-S}
 I=\frac18(3S_3-S_1).
\end{equation}
Because $F_0$ is supported in $[0,31/10]$, the upper limit in $I$ may be replaced by $31/10$.

Throughout this subsection we use
$$
 \wh h(\xi):=\int_{\R^2}h(x)e^{-ix\cdot\xi}\dd x.
$$
Let $J_0$ denote the Bessel function of the first kind of order zero and define
$$
 G(\rho):=\int_0^\infty g(r)J_0(\rho r)r\dd r.
$$
The two-dimensional radial Fourier formula \cite{SteinWeiss} gives
$\wh g(\rho)=2\pi G(\rho)$ and
$$
 \wh F(\rho)=|\wh g(\rho)|^2=(2\pi)^2G(\rho)^2.
$$

\begin{lemma}[Fourier--Bessel identity]\label{lem:FB-d2}
For $a>0$, define
$$
 A_a:=\int_0^{\pi/2}G(a\sin u)^2\sin u\dd u.
$$
Then
\begin{equation}\label{eq:Sa-Aa}
 S_a=2\pi a A_a.
\end{equation}
Consequently,
\begin{equation}\label{eq:I-D}
 I=\frac\pi4D,\qquad D:=9A_3-A_1.
\end{equation}
\end{lemma}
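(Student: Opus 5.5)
Since $F$ is radial on $\R^2$ and $\wh F(\rho)=(2\pi)^2G(\rho)^2$, the plan is to write $F_0$ through the inverse Hankel transform and then integrate against $\sin(ar)$ in the radial variable. Fourier inversion gives
$$
F_0(r)=\frac{1}{2\pi}\int_0^\infty \wh F(\rho)J_0(\rho r)\rho\dd\rho
=2\pi\int_0^\infty G(\rho)^2J_0(\rho r)\rho\dd\rho .
$$
This is justified because $\wh F=|\wh g|^2\in L^1$, as $g\in L^2$ and Plancherel applies. Substituting into $S_a$ and exchanging the order of integration formally gives
$$
S_a=2\pi\int_0^\infty G(\rho)^2\rho\left(\int_0^\infty J_0(\rho r)\sin(ar)\dd r\right)\dd\rho .
$$
The classical Weber-type integral gives $\int_0^\infty J_0(\rho r)\sin(ar)\dd r=(a^2-\rho^2)^{-1/2}$ for $\rho<a$ and $0$ for $\rho>a$. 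Hence
$$
S_a=2\pi\int_0^a \frac{G(\rho)^2\rho}{\sqrt{a^2-\rho^2}}\dd\rho .
$$
The substitution $\rho=a\sin u$ turns $\rho\dd\rho/\sqrt{a^2-\rho^2}$ into $a\sin u\dd u$. This yields $S_a=2\pi aA_a$.

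The main obstacle is justifying the interchange, because the inner integral converges only conditionally. To handle it, I will avoid the improper $r$-integral and work on the finite interval $[0,31/10]$, where $F_0$ vanishes beyond it. I then insert the inverse Hankel representation and apply Fubini on $[0,L]\times[0,\infty)$ with $L\ge 31/10$ fixed. Absolute integrability holds since $|J_0|\le1$ and $\int_0^\infty G^2\rho\dd\rho<\infty$.

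This gives
$$
S_a=2\pi\int_0^\infty G(\rho)^2\rho\,K_L(\rho)\dd\rho,\qquad K_L(\rho):=\int_0^L J_0(\rho r)\sin(ar)\dd r .
$$
The left side does not depend on $L$, so I let $L\to\infty$. The kernels $K_L(\rho)$ converge to the Weber kernel for $\rho\ne a$. They are also uniformly bounded by $C(1+|a^2-\rho^2|^{-1/2})$, which follows from the large-$r$ asymptotics of $J_0$ or an Abel/Dirichlet-test bound. This majorant is integrable against $G^2\rho$ near $\rho=a$, because $G$ is continuous. At infinity it is integrable because $G^2\rho\in L^1$ and the bound decays like $\rho^{-1}$.

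Dominated convergence then gives the identity. An alternative route that avoids uniform kernel bounds is to insert an Abel factor $e^{-\eps r}$. With that factor the computation is absolutely convergent, and one lets $\eps\to0$ using the explicit Laplace–Bessel integral $\int_0^\infty e^{-\eps r}J_0(\rho r)\sin(ar)\dd r$.

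Finally, combining with \eqref{eq:I-S} gives $I=\frac18(3\cdot 2\pi\cdot 3A_3-2\pi A_1)=\frac{\pi}{4}(9A_3-A_1)$, which is \eqref{eq:I-D}.
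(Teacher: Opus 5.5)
Your proposal is correct in outline, but its main route differs from the paper's. The paper never truncates in $r$. It multiplies the whole $r$-integral by $e^{-\eps r}$, which makes Fubini immediate: the double integral is bounded by $(2\pi\eps)^{-1}\int_0^\infty|\wh F(\rho)|\rho\dd\rho$. It then evaluates the regularized kernel in closed form, $K_{\eps,a}(\rho)=\Ima\bigl(\rho^2+(\eps-ia)^2\bigr)^{-1/2}$, from the Laplace transform $\int_0^\infty e^{-sr}J_0(\rho r)\dd r=(s^2+\rho^2)^{-1/2}$, continued analytically to $\Rea s>0$. The pointwise limit follows from the principal argument tending to $-\pi$; this is Weber's integral, which the paper thereby proves rather than cites. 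The endpoint bound $|K_{\eps,a}(\rho)|\le C_a(|\rho-a|+\eps)^{-1/2}$ follows from the factorization $|\rho^2+(\eps-ia)^2|^2=((\rho-a)^2+\eps^2)((\rho+a)^2+\eps^2)$. Together these give $L^1_{\mathrm{loc}}$ convergence of the kernels. This is exactly your fallback ``Abel factor'' route.

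Your sharp truncation at $r=L$ is legitimate, since $F_0$ vanishes beyond $31/10$, and it avoids complex analysis. However, it shifts all the analytic work onto the uniform bound $|K_L(\rho)|\le C_a(1+|a^2-\rho^2|^{-1/2})$ for the partial Weber integrals, which you only sketch. That bound is true, but Bessel asymptotics alone do not give it uniformly for small $\rho$. For $\rho\ge a/2$, split at $r=1/\rho$ and use $J_0(x)=\sqrt{2/(\pi x)}\cos(x-\pi/4)+O(x^{-3/2})$ with $\bigl|\int_{r_0}^{L}r^{-1/2}\sin(\omega r+\phi)\dd r\bigr|\le C|\omega|^{-1/2}$ uniformly in $0<r_0<L$, at frequencies $\omega=a\pm\rho$. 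As $\rho\to0$, however, the same splitting only yields $O(1/\rho)$, because the asymptotic regime starts at $r\sim1/\rho$. There you need an extra step, for example one integration by parts:
$$
 K_L(\rho)=\frac{1-J_0(\rho L)\cos(aL)}{a}-\frac{\rho}{a}\int_0^L J_1(\rho r)\cos(ar)\dd r .
$$
After this, the same splitting gives a bound uniform in $\rho<a/2$ and in $L$, since the frequencies $a\pm\rho$ stay at least $a/2$ and the prefactor $\rho\cdot\rho^{-1/2}$ stays bounded.

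Two minor points. First, $K_L(0)=(1-\cos aL)/a$ does not converge as $L\to\infty$; this is harmless because it is a single point. Second, your stated majorant $C(1+|a^2-\rho^2|^{-1/2})$ is bounded rather than decaying like $\rho^{-1}$. Boundedness already suffices, since $G^2\rho\in L^1$.

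In short, your route is more elementary but rests on oscillatory estimates you still have to write out. The paper's Abel route gets every needed estimate from an explicit formula.
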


\begin{proof}
We proceed in three steps: radial Fourier inversion, analysis of the
Abel-regularized sine kernel, and removal of the regularization.

\smallskip
\noindent\emph{Step 1: Radial Fourier inversion.}
Since $g\in L^2(\R^2)$ and $F=g*\wt g$, Plancherel gives
$\wh F=|\wh g|^2\in L^1(\R^2)$. Also $F\in L^1(\R^2)$ because it is
continuous and compactly supported. Fourier inversion and the radial Fourier
formula \cite{SteinWeiss} therefore give
\begin{equation}\label{eq:radial-inv-2}
 F_0(r)=\frac1{2\pi}\int_0^\infty
 \wh F(\rho)J_0(\rho r)\rho\dd\rho.
\end{equation}

\smallskip
\noindent\emph{Step 2: The Abel kernel and its endpoint behavior.}
Fix $a>0$. For $\eps>0$, define the regularized kernel
$$
 K_{\eps,a}(\rho):=
 \int_0^\infty e^{-\eps r}J_0(\rho r)\sin(ar)\dd r,
 \qquad \rho\ge0.
$$
We first evaluate this kernel. If $\Rea s>\rho$, termwise integration of
the power series for $J_0$ is justified by absolute convergence and gives
$$
\begin{aligned}
 \int_0^\infty e^{-sr}J_0(\rho r)\dd r
 &=\frac1s\sum_{k=0}^\infty
   \binom{2k}{k}\left(-\frac{\rho^2}{4s^2}\right)^k\\
 &=\frac1s\left(1+\frac{\rho^2}{s^2}\right)^{-1/2}
  =(s^2+\rho^2)^{-1/2},
\end{aligned}
$$
with the principal branches. The integral is analytic for $\Rea s>0$ by
differentiation under the integral sign and the bound $|J_0(x)|\le1$ for
real $x$ \cite[Eq.~10.9.1]{DLMF}. The right-hand side is analytic on the
same half-plane. To see this, if $s=u+iv$ with $u>0$ and $s^2+\rho^2$ were
nonpositive real, then $2uv=0$ would imply $v=0$, whereas
$s^2+\rho^2=u^2+\rho^2>0$. The identity theorem therefore extends the
formula to every $\Rea s>0$.

Taking $s=\eps-ia$ and then imaginary parts yields
\begin{equation}\label{eq:abel-reg}
 K_{\eps,a}(\rho)
 =\Ima\bigl(\rho^2+(\eps-ia)^2\bigr)^{-1/2}.
\end{equation}
If $0\le\rho<a$, the number
$$
 \rho^2+(\eps-ia)^2=\rho^2-a^2+\eps^2-2ia\eps
$$
approaches $-(a^2-\rho^2)$ from the lower half-plane. Its principal
argument tends to $-\pi$, and hence
$K_{\eps,a}(\rho)\to(a^2-\rho^2)^{-1/2}$. If $\rho>a$, the limit is
positive real, so $K_{\eps,a}(\rho)\to0$. Thus, for $\rho\ne a$,
\begin{equation}\label{eq:abel-J0}
 K_{\eps,a}(\rho)\longrightarrow
 K_{0,a}(\rho):=
 \frac{\one_{[0,a)}(\rho)}{\sqrt{a^2-\rho^2}}.
\end{equation}

We next control this convergence at the singular endpoint. Fix
$0<\delta<a/2$. For $|\rho-a|<\delta$ and $0<\eps<\delta$, the exact
factorization
$$
 \left|\rho^2+(\eps-ia)^2\right|^2
 =\bigl((\rho-a)^2+\eps^2\bigr)
  \bigl((\rho+a)^2+\eps^2\bigr),
$$
together with
$$
 \bigl((\rho-a)^2+\eps^2\bigr)^{1/2}
 \ge \frac{|\rho-a|+\eps}{\sqrt2},
 \qquad
 (\rho+a)^2+\eps^2\ge\left(\frac{3a}{2}\right)^2,
$$
gives
$$
 |K_{\eps,a}(\rho)|
 \le C_a\bigl(|\rho-a|+\eps\bigr)^{-1/2}.
$$
Consequently,
$$
 \sup_{0<\eps<\delta}\int_{a-\delta}^{a+\delta}
 |K_{\eps,a}(\rho)|\dd\rho\le C_a\sqrt\delta,
 \qquad
 \int_{a-\delta}^{a+\delta}|K_{0,a}(\rho)|\dd\rho
 \le C_a\sqrt\delta.
$$
Away from $\rho=a$, the factorization also gives a uniform bound, so
dominated convergence applies on compact subsets. Letting $\delta\to0^+$
therefore proves
$$
 K_{\eps,a}\longrightarrow K_{0,a}
 \qquad\text{in }L^1_{\mathrm{loc}}([0,\infty)).
$$

\smallskip
\noindent\emph{Step 3: Removal of the regularization.}
We now apply the kernel limit to \eqref{eq:radial-inv-2}. For $\eps>0$,
Fubini's theorem gives
\begin{equation}\label{eq:regularized-Sa}
 \int_0^\infty F_0(r)e^{-\eps r}\sin(ar)\dd r
 =\frac1{2\pi}\int_0^\infty
 \wh F(\rho)\rho K_{\eps,a}(\rho)\dd\rho.
\end{equation}
This interchange is justified because the absolute value of the corresponding
double integral is at most
$$
 \frac1{2\pi\eps}\int_0^\infty|\wh F(\rho)|\rho\dd\rho<\infty.
$$
The function $\rho\mapsto\rho\wh F(\rho)$ is integrable on
$[0,\infty)$ and bounded near $a$. The endpoint estimates above control its
contribution on $(a-\delta,a+\delta)$ uniformly in $\eps$. On the complement,
the factorization gives a uniform bound for $K_{\eps,a}$, so dominated
convergence applies. Hence the right-hand side of
\eqref{eq:regularized-Sa} tends to
$$
 \frac1{2\pi}\int_0^a
 \wh F(\rho)\frac{\rho}{\sqrt{a^2-\rho^2}}\dd\rho.
$$
On the left, compact support of $F_0$ gives convergence to $S_a$. Using
$\wh F=(2\pi)^2G^2$ and then substituting $\rho=a\sin u$, we obtain
$$
\begin{aligned}
 S_a
 &=\frac1{2\pi}\int_0^a
   \wh F(\rho)\frac{\rho}{\sqrt{a^2-\rho^2}}\dd\rho\\
 &=2\pi\int_0^a
   G(\rho)^2\frac{\rho}{\sqrt{a^2-\rho^2}}\dd\rho\\
 &=2\pi a\int_0^{\pi/2}G(a\sin u)^2\sin u\dd u.
\end{aligned}
$$
This proves \eqref{eq:Sa-Aa}, and \eqref{eq:I-S} then gives
\eqref{eq:I-D}.
\end{proof}

\subsection{Exact rational series certificate}
To prove that $D<0$, we begin with the disk identity
$$
 \int_0^R J_0(\rho r)r\dd r=\frac{RJ_1(R\rho)}{\rho}
$$
which follows at once from
$\bigl(zJ_1(z)\bigr)'=zJ_0(z)$ \cite[Eq.~10.6.6]{DLMF} and rescaling.
Together with \eqref{eq:seed}, it gives
$$
 G(\rho)=\frac{-\frac72J_1(\frac7{10}\rho)+6J_1(\frac65\rho)-\frac{93}{20}J_1(\frac{31}{20}\rho)}{\rho}.
$$
Using the everywhere convergent Bessel expansion \cite[Eq.~10.2.2]{DLMF},
$$
 J_1(z)=\sum_{k=0}^\infty\frac{(-1)^k}{k!(k+1)!}\left(\frac z2\right)^{2k+1},
$$
we obtain
$$
 G(z)=\sum_{k=0}^\infty b_kz^{2k},
$$
where
\begin{equation}\label{eq:bk}
 b_k:=\frac{(-1)^k}{2^{2k+1}k!(k+1)!}
 \left[-5\left(\frac7{10}\right)^{2k+2}
 +5\left(\frac65\right)^{2k+2}
 -3\left(\frac{31}{20}\right)^{2k+2}\right].
\end{equation}
All $b_k$ are rational. Put
$$
 r_3:=\frac{31}{20},\qquad
 C_m:=\sum_{j=0}^m b_jb_{m-j},\qquad
 W_m:=\int_0^{\pi/2}\sin^{2m+1}u\dd u
 =\frac{4^m(m!)^2}{(2m+1)!}.
$$
The last identity is the beta integral \cite[Eqs.~5.12.1--5.12.2]{DLMF}.
Uniform convergence on compact intervals justifies the Cauchy product. For
$a=1,3$, termwise integration is justified by the absolute convergence
$\sum_m|C_m|a^{2m}W_m<\infty$. In fact, \eqref{eq:Cm-bound}, whose proof does
not use the resulting series for $A_a$, yields
$$
 |C_m|a^{2m}W_m
 \le \frac{169r_3^4(a^2r_3^2)^m}{4(2m+1)(m!)^2},
 \qquad a\in\{1,3\},
$$
and the consecutive-term ratio on the right tends to zero.
Thus
$$
 G(z)^2=\sum_{m=0}^\infty C_mz^{2m},\qquad
 A_a=\sum_{m=0}^\infty C_ma^{2m}W_m,
$$
and therefore
\begin{equation}\label{eq:D-series}
 D=\sum_{m=0}^\infty C_mW_m\bigl(9^{m+1}-1\bigr).
\end{equation}
Every summand is rational.

Let
$$
 \gamma:=9r_3^2=\frac{8649}{400},\qquad
 K:=\frac{1521}{4}r_3^4.
$$
The triangle inequality in \eqref{eq:bk} gives
$$
 |b_k|\le \frac{13r_3^{2k+2}}{2^{2k+1}k!(k+1)!}.
$$
Using $(j+1)!\ge j!$, $(m-j+1)!\ge(m-j)!$, and Vandermonde's convolution \cite{ConcreteMath},
\begin{equation}\label{eq:Cm-bound}
\begin{aligned}
 |C_m|
 &\le \frac{169r_3^{2m+4}}{2^{2m+2}}
 \sum_{j=0}^m\frac1{j!^2(m-j)!^2}\\
 &=\frac{169r_3^{2m+4}}{2^{2m+2}(m!)^2}
 \sum_{j=0}^m\binom mj^2\\
 &=\frac{169r_3^{2m+4}}{2^{2m+2}}\frac{(2m)!}{(m!)^4}.
\end{aligned}
\end{equation}
Combining \eqref{eq:Cm-bound} with $9^{m+1}-1<9^{m+1}$ and the value of $W_m$ gives
$$
 |C_mW_m(9^{m+1}-1)|\le B_m:=\frac{K\gamma^m}{(2m+1)(m!)^2}.
$$
Moreover,
$$
 \frac{B_{m+1}}{B_m}
 =\gamma\frac{2m+1}{2m+3}\frac1{(m+1)^2}
 <\frac{\gamma}{(m+1)^2}
 \le\frac{8649}{19600}<\frac12,
 \qquad m\ge6.
$$
(The ratio is $>1/2$ at $m=5$, so the threshold $m\ge6$ is sharp for this estimate.)
Hence, for
$$
 D_M:=\sum_{m=0}^M C_mW_m(9^{m+1}-1),
$$
comparison with a geometric series yields
\begin{equation}\label{eq:D-tail}
 |D-D_M|\le2B_{M+1},\qquad M\ge6.
\end{equation}

For $M=16$, exact reduction of the rational numbers in \eqref{eq:bk} gives
\begin{equation}\label{eq:D16-bound}
 D_{16}< -\frac6{125}.
\end{equation}
The corresponding tail bound in \eqref{eq:D-tail} satisfies
\begin{equation}\label{eq:B17-bound}
 2B_{17}<\frac1{10000}.
\end{equation}
Consequently,
\begin{equation}\label{eq:D-negative}
 D\le D_{16}+|D-D_{16}|
 < -\frac6{125}+\frac1{10000}
 =-\frac{479}{10000}<0.
\end{equation}
All quantities in \eqref{eq:D16-bound} and \eqref{eq:B17-bound} are rational. Appendix~\ref{app:exact-certificate} verifies both inequalities by displaying the exact integer identities obtained after clearing denominators. Thus the verification is an exact finite calculation; no floating-point approximation or numerical quadrature enters the sign proof.

Combining \eqref{eq:I-D} and \eqref{eq:F0-value},
\begin{equation}\label{eq:d2-final}
 \int_0^{31/10}f(t)P_2(\cos t)\sin t\dd t
 =\frac{I}{F(0)}=\frac{100}{6749}D<0.
\end{equation}
Lemma~\ref{lem:d2-basic} gives $f\in\Phi_2$ and the required support, while \eqref{eq:d2-final} and \eqref{eq:schoenberg-sign} imply $f|_{[0,\pi]}\notin\Psi_2$. This proves Theorem~\ref{thm:d2}.

\section{The even-dimensional construction}
In this section, we prove Theorem~\ref{thm:main}. The proof uses a radial test function $Q_m$ constructed from the degree-two Gegenbauer polynomial, which converts the sign of the corresponding spherical coefficient into a Fourier-analytic problem. After introducing $Q_m$ in Subsection~\ref{sub4.1}, we determine its Fourier transform in Subsection~\ref{sub4.2} and establish its negativity near the origin in Subsection~\ref{sub4.3}. Finally, Subsection~\ref{sub4.4} uses a localization argument to construct the required compactly supported autocorrelation and complete the proof.

\subsection{The degree-two test function}\label{sub4.1}
Fix $m\ge1$, and set
$$
 d:=2m,\qquad \lambda:=m-\frac12,\qquad \nu:=m-1.
$$
In this section $B_R:=B_R^{(2m)}$. The Schwartz space is denoted by $\mathcal S(\R^d)$ and its dual by $\mathcal S'(\R^d)$. We write $\delta_0$ for the Dirac distribution at the origin and
$$
 \Delta:=\sum_{j=1}^d\partial_{x_j}^2.
$$
Our Fourier convention is
\begin{equation}\label{eq:FT-convention}
 \wh h(\xi):=\int_{\R^d}h(x)e^{-ix\cdot\xi}\dd x,
 \qquad
 h(x)=(2\pi)^{-d}\int_{\R^d}\wh h(\xi)e^{ix\cdot\xi}\dd\xi,
\end{equation}
whenever the integrals are classical.
For $T\in\mathcal S'(\R^d)$, we use the corresponding distributional
convention
$$
 \langle\wh T,\varphi\rangle:=\langle T,\wh\varphi\rangle,
 \qquad \varphi\in\mathcal S(\R^d),
$$
which agrees with \eqref{eq:FT-convention} when $T\in L^1(\R^d)$.
If $h(x)=h_0(|x|)$ is radial in $\R^{2m}$, we write $\wh h(\rho)$ for the
common value of $\wh h(\xi)$ when $|\xi|=\rho$. Then
\begin{equation}\label{eq:radial-2m}
 \wh h(\rho)=(2\pi)^m\rho^{-\nu}\int_0^\infty h_0(r)J_\nu(\rho r)r^m\dd r,
 \qquad \rho=|\xi|,
\end{equation}
see \cite{SteinWeiss}.

The normalized Gegenbauer polynomial of degree two is
\begin{equation}\label{eq:R2}
 R_2^{m-\frac12}(x)=\frac{(2m+1)x^2-1}{2m}.
\end{equation}
Define
\begin{equation}\label{eq:Qm-def}
\begin{aligned}
 K_m(t)&:=R_2^{m-\frac12}(\cos t)(\sin t)^{2m-1},\\
 \chi_m(t)&:=R_2^{m-\frac12}(\cos t)\left(\frac{\sin t}{t}\right)^{2m-1}
 =\frac{K_m(t)}{t^{2m-1}},\\
 Q_m(x)&:=\chi_m(|x|),\qquad x\in\R^{2m}.
\end{aligned}
\end{equation}
The value at $t=0$ is understood by continuity. Since both $\cos t$ and $\sin t/t$ are even entire functions, $\chi_m$ has an everywhere convergent power series in $t^2$. Moreover $R_2^{m-1/2}(\cos t)$ is bounded and
$$
 \left|\frac{\sin t}{t}\right|^{2m-1}=O(|t|^{-(2m-1)}),
$$
so $Q_m$ is real-valued, continuous, bounded, tends to zero at infinity, and defines a tempered distribution.

If $F\in C_c(\R^{2m})$ is isotropic and $F_0(r):=F(re_1)$, then, with $\omega_{2m-1}:=|\Sph^{2m-1}|$,
\begin{equation}\label{eq:pairing}
\begin{aligned}
 \int_{\R^{2m}}F(x)Q_m(x)\dd x
 &=\omega_{2m-1}\int_0^\infty F_0(r)\chi_m(r)r^{2m-1}\dd r\\
 &=\omega_{2m-1}\int_0^\infty F_0(r)R_2^{m-\frac12}(\cos r)(\sin r)^{2m-1}\dd r.
\end{aligned}
\end{equation}
Thus the sign of the Euclidean pairing is exactly the sign of the degree-two spherical coefficient.

Using \eqref{eq:R2} and $\cos^2t=1-\sin^2t$ gives
\begin{equation}\label{eq:Km-sinepowers}
 K_m(t)=\sin^{2m-1}t-\frac{2m+1}{2m}\sin^{2m+1}t.
\end{equation}
Hence $K_m$ is a finite sine polynomial containing only odd frequencies. For $a\in\{1,3,\dots,2m+1\}$, let $q_{m,a}$ be the coefficient of $\sin(at)$ and define
$$
 E_m:=\{a\in\{1,3,\dots,2m+1\}:q_{m,a}\ne0\}.
$$
Then
\begin{equation}\label{eq:Km-expansion}
 K_m(t)=\sum_{a\in E_m}q_{m,a}\sin(at),
\end{equation}
and every $q_{m,a}$ is rational.

\begin{lemma}\label{lem:cancellations}
The coefficients in \eqref{eq:Km-expansion} satisfy
\begin{equation}\label{eq:moment-cancel}
 \sum_{a\in E_m}q_{m,a}a^j=0,
 \qquad j=1,3,\dots,2m-3,
\end{equation}
and
\begin{equation}\label{eq:inverse-moment}
 \sum_{a\in E_m}\frac{q_{m,a}}a=0.
\end{equation}
The first family is empty when $m=1$.
\end{lemma}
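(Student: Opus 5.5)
The plan is to read both identities off the Taylor expansion of $K_m$ at $t=0$, together with the integral of $K_m(t)/t$. The moments $\sum_a q_{m,a}a^j$ are derivatives of $K_m$ at the origin. Differentiating \eqref{eq:Km-expansion} $j$ times at $t=0$, for odd $j$, gives
$$
K_m^{(j)}(0)=(-1)^{(j-1)/2}\sum_{a\in E_m}q_{m,a}a^j .
$$
From \eqref{eq:Km-sinepowers}, $K_m(t)=\sin^{2m-1}t\,\bigl(1-\tfrac{2m+1}{2m}\sin^2t\bigr)$ vanishes to order $2m-1$ at $t=0$. Hence $K_m^{(j)}(0)=0$ for all $j\le 2m-2$, and in particular for $j=1,3,\dots,2m-3$. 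This proves \eqref{eq:moment-cancel}. When $m=1$ the index set is empty, as the statement notes.

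For \eqref{eq:inverse-moment}, I would use $\int_0^\infty \sin(at)\,t^{-1}\dd t=\pi/2$ for every $a>0$. The conditionally convergent integral $\int_0^\infty K_m(t)t^{-1}\dd t$ is then $\frac{\pi}{2}\sum_a q_{m,a}$, and this does not give $\sum_a q_{m,a}/a$. So I would instead integrate from $0$ to $x$ and use $\int_0^x \sin(at)\dd t=(1-\cos(ax))/a$. This shows that $\sum_a q_{m,a}/a$ is the constant term of the antiderivative
$$
\int_0^x K_m(t)\dd t=\sum_{a}\frac{q_{m,a}}{a}-\sum_a\frac{q_{m,a}}{a}\cos(ax).
$$
Averaging over a period, $\sum_a q_{m,a}/a$ equals the mean of $x\mapsto\int_0^xK_m$ over $[0,2\pi]$. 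A cleaner route: since all frequencies are odd, $\cos(a\pi)=-1$, so
$$
\int_0^\pi K_m(t)\dd t=2\sum_{a\in E_m}\frac{q_{m,a}}a .
$$
Thus \eqref{eq:inverse-moment} is equivalent to $\int_0^\pi R_2^{m-\frac12}(\cos t)\sin^{2m-1}t\dd t=0$. This is exactly the orthogonality of $R_2^{\lambda}$ to the constant $R_0^\lambda=1$ with respect to the Gegenbauer weight $(\sin t)^{d-1}=(\sin t)^{2m-1}$ on $[0,\pi]$, which was recalled in Section~2.1 \cite[Section 4.7]{Szego}. As a direct check, substitute $x=\cos t$. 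The claim becomes $\int_{-1}^1\bigl((2m+1)x^2-1\bigr)(1-x^2)^{m-1}\dd x=0$. Writing $I_k=\int_0^{\pi}\sin^k t\dd t$, this reduces via \eqref{eq:Km-sinepowers} to $I_{2m-1}=\frac{2m+1}{2m}I_{2m+1}$, which is the standard Wallis recursion $I_{2m+1}=\frac{2m}{2m+1}I_{2m-1}$.

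I expect no serious obstacle; the only subtle point is the endpoint $\cos(a\pi)=-1$ bookkeeping. That step relies on every $a\in E_m$ being odd, which was established from \eqref{eq:Km-sinepowers}, since odd powers of $\sin t$ expand into odd sine frequencies only. I would also remark that $q_{m,a}$ for $a\notin E_m$ vanish, so summing over $E_m$ or over all odd $a\le 2m+1$ is immaterial.
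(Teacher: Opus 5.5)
Your proposal is correct and follows essentially the same route as the paper: the odd moments vanish because $K_m$ vanishes to order $2m-1$ at $t=0$ (you read this off derivatives, the paper off Taylor coefficients), and the inverse moment vanishes because $\int_0^\pi K_m(t)\dd t=2\sum_{a\in E_m}q_{m,a}/a$ for odd frequencies, while Gegenbauer orthogonality of $R_2^{m-\frac12}$ to constants forces this integral to be zero. Your Wallis check $\int_0^\pi\sin^{2m+1}t\dd t=\frac{2m}{2m+1}\int_0^\pi\sin^{2m-1}t\dd t$ is a nice self-contained confirmation of that orthogonality, and the exploratory detours through $\int_0^\infty K_m(t)t^{-1}\dd t$ and the antiderivative average can simply be dropped.
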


\begin{proof}
From \eqref{eq:Km-sinepowers}, we have
$$
 K_m(t)=t^{2m-1}+O(t^{2m+1}),\qquad t\to0.
$$
On the other hand,
$$
 K_m(t)=\sum_{k=0}^\infty\frac{(-1)^kt^{2k+1}}{(2k+1)!}
 \sum_{a\in E_m}q_{m,a}a^{2k+1}.
$$
The coefficients of $t,t^3,\dots,t^{2m-3}$ vanish, proving \eqref{eq:moment-cancel}.

For \eqref{eq:inverse-moment}, substitute $x=\cos t$ in the Gegenbauer orthogonality relation \cite[Eq.~B.2.3]{DaiXu}. Since $R_2^{m-1/2}$ is orthogonal to the constant polynomial with respect to $(1-x^2)^{m-1}$,
$$
 0=\int_0^\pi K_m(t)\dd t.
$$
Every $a\in E_m$ is odd, so $\int_0^\pi\sin(at)\dd t=\frac 2 a$. Therefore
$$
 0=2\sum_{a\in E_m}\frac{q_{m,a}}a.
$$
\end{proof}

The two identities supply the cancellations used in the transform calculation:
they remove the lower-order terms at the origin, leaving the sign to be decided
by the quantity $M_m$ introduced in Subsection~\ref{sub4.3}.

\subsection{The Fourier transform of \texorpdfstring{$Q_m$}{Qm}}\label{sub4.2}
To compute the Fourier transform of $Q_m$, which is not in $L^1(\R^{2m})$, regularize it by $e^{-\eps|x|}$. The transform is first evaluated for $\eps>0$ and then passed to the boundary value as $\eps\to0^+$.
The regularized transform is a finite combination of Laplace--Bessel integrals,
so we first evaluate those integrals in closed form. The polynomial part of the
formula is annihilated by \eqref{eq:moment-cancel}, leaving a half-integer power
whose boundary value determines the sign.

For $\rho>0$, $\nu\in\N_0$, and $\Rea s>0$, define
\begin{equation}\label{eq:Lnu}
 L_\nu(s,\rho):=\int_0^\infty e^{-st}J_\nu(\rho t)t^{-\nu}\dd t.
\end{equation}
The integral converges at infinity because $\Rea s>0$ and at the origin by the standard behavior of $J_\nu$ \cite[Eq.~10.7.3]{DLMF}. We use the Gauss hypergeometric series
$$
 {}_2F_1(a,b;c;z):=\sum_{j=0}^\infty\frac{(a)_j(b)_j}{(c)_j\,j!}z^j.
$$

\begin{lemma}[Laplace--Bessel formula]\label{lem:LaplaceBessel}
Let $\rho>0$ and $\nu\in\N_0$. If $\Rea s>\rho$, then
\begin{equation}\label{eq:hypergeom}
 L_\nu(s,\rho)=\frac{(\rho/2)^\nu}{\Gamma(\nu+1)s}
 {}_2F_1\left(\frac12,1;\nu+1;-\frac{\rho^2}{s^2}\right),
\end{equation}
where ${}_2F_1$ is given by the  hypergeometric series above. If $\nu\ge1$, then for every $\Rea s>0$,
\begin{equation}\label{eq:Lnu-finite}
 L_\nu(s,\rho)=\frac1{(2\nu-1)!!\,\rho^\nu}
 \left[(s^2+\rho^2)^{\nu-\frac12}
 -\sum_{k=0}^{\nu-1}\binom{\nu-\frac12}{k}\rho^{2k}s^{2\nu-1-2k}\right],
\end{equation}
where the principal branch is used. For $\nu=0$,
$$
 L_0(s,\rho)=(s^2+\rho^2)^{-1/2}.
$$
\end{lemma}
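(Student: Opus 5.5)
The plan is to establish \eqref{eq:hypergeom} first in the region $\Rea s>\rho$ by series expansion, and then to obtain \eqref{eq:Lnu-finite} for all $\Rea s>0$ by analytic continuation. This mirrors the $\nu=0$ computation in the proof of Lemma~\ref{lem:FB-d2}.

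\textbf{Step 1: the series formula.} Insert the everywhere convergent series
$$
J_\nu(\rho t)=\sum_{k\ge0}\frac{(-1)^k(\rho t/2)^{2k+\nu}}{k!\,(k+\nu)!}
$$
into \eqref{eq:Lnu}, so that $t^{-\nu}J_\nu(\rho t)=\sum_k c_k t^{2k}$. Termwise integration gives
$$
L_\nu(s,\rho)=\sum_k c_k\,\frac{(2k)!}{s^{2k+1}}.
$$
To justify the interchange, note that $\sum_k|c_k|t^{2k}\le C\,e^{\rho t}$, which is integrable against $e^{-(\Rea s)t}$ because $\Rea s>\rho$; dominated convergence then applies. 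Next write $(2k)!=4^k k!\,(1/2)_k$ and $(k+\nu)!=\nu!\,(\nu+1)_k$, and use $k!=(1)_k$. The general term becomes
$$
\frac{(\rho/2)^\nu}{\nu!\,s}\cdot\frac{(1/2)_k(1)_k}{(\nu+1)_k\,k!}\left(-\frac{\rho^2}{s^2}\right)^k,
$$
which is exactly \eqref{eq:hypergeom}. For $\nu=0$ the series collapses to $\sum_k\binom{2k}{k}(-\rho^2/4s^2)^k=(1+\rho^2/s^2)^{-1/2}$, as in the proof of Lemma~\ref{lem:FB-d2}.

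\textbf{Step 2: the closed form.} For $\nu\ge1$, I would verify \eqref{eq:Lnu-finite} by expanding its right-hand side in powers of $\rho^2/s^2$ when $|s|>\rho$. By the binomial series,
$$
(s^2+\rho^2)^{\nu-1/2}=s^{2\nu-1}\sum_{j\ge0}\binom{\nu-\tfrac12}{j}\left(\frac{\rho}{s}\right)^{2j}.
$$
Subtracting the $k<\nu$ terms leaves $\sum_{j\ge\nu}$. Reindex with $j=\nu+k$ and divide by $(2\nu-1)!!\,\rho^\nu$. The task is then to check the coefficient identity
$$
\frac{1}{(2\nu-1)!!}\binom{\nu-\tfrac12}{\nu+k}
=\frac{(-1)^k(1/2)_k(1)_k}{2^\nu\,\nu!\,(\nu+1)_k\,k!},\qquad k\ge0.
$$
The left side equals $\Gamma(\nu+\tfrac12)/\bigl[(2\nu-1)!!\,\Gamma(\nu+k+1)\,\Gamma(\tfrac12-k)\bigr]$. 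Simplify it using $\Gamma(\nu+\tfrac12)=(2\nu-1)!!\sqrt\pi/2^\nu$ together with $\Gamma(\tfrac12-k)=(-1)^k\sqrt\pi/(1/2)_k$. On the right, $(1)_k/k!=1$ and $\nu!\,(\nu+1)_k=(\nu+k)!$, and the two sides agree. This establishes \eqref{eq:Lnu-finite} whenever $\Rea s>\rho$.

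\textbf{Step 3: continuation to $\Rea s>0$.} Both sides are analytic on $\Rea s>0$. The left side is analytic by differentiation under the integral sign, using $|t^{-\nu}J_\nu(\rho t)|\le C$ for $t\ge0$; this bound follows from the small-argument behaviour of $J_\nu$ and from $|J_\nu|\le1$ for large $t$. The right side is analytic because $s^2+\rho^2$ never lies in $(-\infty,0]$ when $\Rea s>0$, by the same argument used in the proof of Lemma~\ref{lem:FB-d2}; the remaining terms are polynomials. The identity theorem then extends \eqref{eq:Lnu-finite} to the whole half-plane. The $\nu=0$ case is the formula already obtained in Lemma~\ref{lem:FB-d2}.

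The main obstacle is the bookkeeping of the coefficient identity in Step 2: signs, half-integer Pochhammer symbols, and the branch choice all have to match. If that becomes messy, I would fall back on a recursion. Using $\frac{d}{dt}\bigl(t^{-\nu}J_\nu\bigr)=-\rho\,t^{-\nu}J_{\nu+1}$ and integrating by parts gives $\rho L_{\nu+1}=(\text{boundary value at }0)-sL_\nu$. One then checks that the closed form satisfies this first-order recursion, and the result follows by induction from $\nu=0$.
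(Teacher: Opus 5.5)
Your proposal is correct and follows the paper's proof essentially step for step: termwise integration on $\Rea s>\rho$ to get the hypergeometric series, binomial resummation via the coefficient identity \eqref{eq:binom-id} (which you actually verify through $\Gamma$-function values, whereas the paper only states it), and extension to $\Rea s>0$ by the identity theorem. The differences in how you justify the interchange and the bound on $t^{-\nu}J_\nu(\rho t)$ are cosmetic. The only slip is in your unused fallback: integrating $\frac{d}{dt}\bigl(t^{-\nu}J_\nu(\rho t)\bigr)=-\rho\,t^{-\nu}J_{\nu+1}(\rho t)$ by parts yields $\rho\,\partial_sL_{\nu+1}=sL_\nu-(\rho/2)^\nu/\nu!$, which is a differential relation in $s$ rather than an algebraic one, so the recursion you wrote would not hold as stated.
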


\begin{proof}
We proceed in three steps: derivation of the hypergeometric series, its
resummation as \eqref{eq:Lnu-finite}, and analytic continuation.

\smallskip
\noindent\emph{Step 1: Termwise integration and the hypergeometric form.}
First assume $\sigma:=\Rea s>\rho$. After inserting the Bessel series, the sum of the integrals of the absolute values of its terms is finite: the resulting positive series has consecutive-term ratio tending to $\rho^2/\sigma^2<1$. Tonelli's theorem therefore justifies termwise integration. Since
$$
 J_\nu(z)=\sum_{j=0}^\infty\frac{(-1)^j}{j!\Gamma(j+\nu+1)}\left(\frac z2\right)^{2j+\nu}
$$
and
$$
 \int_0^\infty e^{-st}t^{2j}\dd t=\frac{(2j)!}{s^{2j+1}},
$$
we obtain
$$
\begin{aligned}
 L_\nu(s,\rho)
 &=\sum_{j=0}^\infty\frac{(-1)^j(\rho/2)^{2j+\nu}(2j)!}{j!\Gamma(j+\nu+1)s^{2j+1}}\\
 &=\frac{(\rho/2)^\nu}{\Gamma(\nu+1)s}
 \sum_{j=0}^\infty\frac{(\frac12)_j(1)_j}{(\nu+1)_j\,j!}
 \left(-\frac{\rho^2}{s^2}\right)^j.
\end{aligned}
$$
Here we used $(2j)!=4^j j!(\frac12)_j$,
$\Gamma(j+\nu+1)=\Gamma(\nu+1)(\nu+1)_j$, and $(1)_j=j!$.
Thus the last series is precisely the hypergeometric series in
\eqref{eq:hypergeom}, proving that formula when $\Rea s>\rho$.

\smallskip
\noindent\emph{Step 2: Resummation as a finite expression.}
For $\nu\ge1$, the standing assumption $\Rea s>\rho$ implies
$|\rho/s|<1$, so we may expand and subtract the first $\nu$ terms:
$$
 (s^2+\rho^2)^{\nu-\frac12}
 -\sum_{j=0}^{\nu-1}\binom{\nu-\frac12}{j}\rho^{2j}s^{2\nu-1-2j}
 =\sum_{k=0}^\infty\binom{\nu-\frac12}{\nu+k}\rho^{2\nu+2k}s^{-1-2k}.
$$
The generalized binomial coefficient satisfies
\begin{equation}\label{eq:binom-id}
 \frac1{(2\nu-1)!!}\binom{\nu-\frac12}{\nu+k}
 =\frac{(-1)^k(\frac12)_k}{2^\nu\nu!(\nu+1)_k},\qquad k\ge0.
\end{equation}
Dividing by $(2\nu-1)!!\rho^\nu$ and using \eqref{eq:binom-id} gives exactly the hypergeometric series in \eqref{eq:hypergeom}.

\smallskip
\noindent\emph{Step 3: Analytic continuation to $\Rea s>0$.}
It remains to extend \eqref{eq:Lnu-finite} from \(\Rea s>\rho\) to the whole half-plane \(\Rea s>0\). Fix \(\sigma_0>0\). Poisson's integral representation \cite[Eq.~10.9.4]{DLMF}, followed by the
beta integral, gives, for every integer $\nu\ge 0$,   $\rho\ge 0$, and
  $t\ge 0$,
\[
t^{-\nu}\lvert J_\nu(\rho t)\rvert
\le \frac{(\rho/2)^\nu}{\Gamma(\nu+1)},
\]
where, at $t=0$, the left-hand side is understood by continuous extension. Consequently, for every integer \(k\ge0\) and every \(s\) with \(\Rea s\ge\sigma_0\),
\[
\left|
\frac{\partial^k}{\partial s^k}
\bigl(e^{-st}J_\nu(\rho t)t^{-\nu}\bigr)
\right|
\le C_{\nu,\rho}\,t^k e^{-\sigma_0t}.
\]
The function on the right is integrable on \([0,\infty)\). We may therefore differentiate under the integral sign, and it follows that \(L_\nu(s,\rho)\) is analytic for \(\Rea s>0\).

We next verify that the right-hand side of \eqref{eq:Lnu-finite} is analytic on the same half-plane. Write \(s=u+iv\), where \(u>0\). If \(s^2+\rho^2\) were on the nonpositive real axis, then its imaginary part \(2uv\) would vanish. Since \(u>0\), this would imply \(v=0\), but then
\[
s^2+\rho^2=u^2+\rho^2>0,
\]
a contradiction. Thus \(s^2+\rho^2\) never meets the branch cut of the principal power. Consequently, both sides of \eqref{eq:Lnu-finite} are analytic for \(\Rea s>0\). Since they agree on the open set \(\Rea s>\rho\), the identity theorem shows that they agree throughout \(\Rea s>0\).

For $\nu=0$, the hypergeometric identity already proved on
$\Rea s>\rho$ and the binomial series give
$$
 L_0(s,\rho)
 =\frac1s\,{}_2F_1\left(\frac12,1;1;-\frac{\rho^2}{s^2}\right)
 =(s^2+\rho^2)^{-1/2}.
$$
Both sides are analytic on $\Rea s>0$ by the arguments above, so the
identity theorem extends this formula to the full right half-plane.
\end{proof}

It remains to take the boundary value of the half-integer power. We record the
limit for every exponent $\alpha>-1$; the general form also makes the difference
between even and odd dimensions visible in Remark~\ref{rem:even-odd}.

\begin{lemma}\label{lem:boundary}
Let $\alpha>-1$ and $a>0$. For $\eps>0$, define
$$
 B_\eps(\rho):=\Ima\bigl((\eps-ia)^2+\rho^2\bigr)^\alpha,
 \qquad \rho>0,
$$
using the principal branch, and put
$$
 B_0(\rho):=-\sin(\pi\alpha)\one_{(0,a)}(\rho)(a^2-\rho^2)^\alpha.
$$
Then, as $\eps\to0^+$, $B_\eps(\rho)\to B_0(\rho)$ for every $\rho\ne a$, and for every compact interval $K\subset(0,\infty)$,
$$
 \lim_{\eps\to0^+}\int_K|B_\eps(\rho)-B_0(\rho)|\dd\rho=0.
$$
\end{lemma}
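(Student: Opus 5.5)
We need to prove Lemma~\ref{lem:boundary}, the pointwise and $L^1_{\mathrm{loc}}$ convergence of $B_\eps$. The plan follows Step~2 of Lemma~\ref{lem:FB-d2}: compute the pointwise limit from the position of $z_\eps:=(\eps-ia)^2+\rho^2$ relative to the branch cut, and handle the singular point $\rho=a$ by a uniform integrable bound.

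\emph{Pointwise limit.} Write
$$
 z_\eps=\rho^2-a^2+\eps^2-2ia\eps .
$$
Its imaginary part is $-2a\eps<0$, so $z_\eps$ lies strictly in the open lower half-plane. The principal branch is therefore continuous along the path, and $\arg z_\eps\in(-\pi,0)$.

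If $\rho>a$, then $z_\eps\to\rho^2-a^2>0$. By continuity of the principal power away from the cut, $z_\eps^\alpha\to(\rho^2-a^2)^\alpha$, which is real, so $B_\eps(\rho)\to0=B_0(\rho)$.

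If $0<\rho<a$, then $z_\eps\to-(a^2-\rho^2)$ from below. Hence $|z_\eps|\to a^2-\rho^2$ and $\arg z_\eps\to-\pi$. Writing $z_\eps^\alpha=|z_\eps|^\alpha e^{i\alpha\arg z_\eps}$, we get
$$
 \Ima z_\eps^\alpha\to (a^2-\rho^2)^\alpha\sin(-\pi\alpha)=B_0(\rho).
$$
This is the only place where the lower-half-plane approach matters; it produces the sign $-\sin(\pi\alpha)$.

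\emph{Uniform bounds.} The bound $|B_\eps(\rho)|\le|z_\eps|^\alpha$ holds for all $\eps$, and we use the factorization
$$
 |z_\eps|^2=\bigl((\rho-a)^2+\eps^2\bigr)\bigl((\rho+a)^2+\eps^2\bigr).
$$
Fix a compact $K\subset(0,\infty)$ and take $0<\eps\le1$. Then $(\rho+a)^2+\eps^2$ lies between two positive constants depending on $K$ and $a$.

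If $\alpha\ge0$, the bound gives $|z_\eps|^\alpha\le C_{K,a}$ uniformly on $K$. If $-1<\alpha<0$, we use $(\rho-a)^2+\eps^2\ge(\rho-a)^2$ to get
$$
 |z_\eps|^\alpha\le C_{K,a}|\rho-a|^{\alpha}.
$$
This majorant is integrable on $K$ precisely because $\alpha>-1$. This is the main (though mild) obstacle, and it is exactly where the hypothesis $\alpha>-1$ enters. The same majorant also dominates $|B_0|$ on $K$, since $|B_0(\rho)|\le (a^2-\rho^2)^\alpha\le C|\rho-a|^\alpha$ for $\rho<a$.

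\emph{Conclusion.} We now have pointwise convergence of $B_\eps$ to $B_0$ off the null set $\{a\}$, and the majorant $2C_{K,a}\max(1,|\rho-a|^\alpha)\in L^1(K)$ for $|B_\eps-B_0|$. Dominated convergence then gives $\int_K|B_\eps-B_0|\dd\rho\to0$.

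Alternatively, one can mirror the $\delta$-splitting used earlier. Bound the contribution of $(a-\delta,a+\delta)$ by $C\delta^{1+\min(\alpha,0)}$, use uniform convergence on the rest of $K$, and let $\delta\to0$. Either route completes the proof.
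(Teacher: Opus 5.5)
Your proof is correct and follows essentially the paper's route: the same analysis of $z_\eps$ approaching the negative axis from the lower half-plane gives the pointwise limit, and the same factorization of $|z_\eps|^2$ controls the singular point $\rho=a$, with $\alpha>-1$ supplying integrability. The only difference is minor: for $\alpha<0$ you use the $\eps$-independent majorant $|\rho-a|^{\alpha}$, from $(\rho-a)^2+\eps^2\ge(\rho-a)^2$, so a single dominated-convergence step on $K$ replaces the paper's $\delta$-splitting with the $\eps$-dependent bound $C\bigl((\rho-a)^2+\eps^2\bigr)^{\min\{\alpha,0\}/2}$.
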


\begin{proof}
Put
$$
 z_\eps(\rho):=(\eps-ia)^2+\rho^2
 =\rho^2-a^2+\eps^2-2ia\eps.
$$
We first determine the pointwise limit. If $0<\rho<a$, then
$z_\eps(\rho)$ approaches $-(a^2-\rho^2)$ from the lower half-plane. Its
principal argument tends to $-\pi$, and therefore
$$
 B_\eps(\rho)\longrightarrow
 -\sin(\pi\alpha)(a^2-\rho^2)^\alpha =B_0(\rho).
$$
If $\rho>a$, then $z_\eps(\rho)$ approaches a positive real number, so
$B_\eps(\rho)\to0=B_0(\rho)$. This proves the asserted pointwise
convergence away from $\rho=a$.

It remains to control the endpoint. Set $\beta:=\min\{\alpha,0\}$, so
$-1<\beta\le0$. Fix $0<\delta<a/2$. For $|\rho-a|<\delta$ and
$0<\eps<\delta$, the exact factorization
$$
 |z_\eps(\rho)|^2
 =\bigl((\rho-a)^2+\eps^2\bigr)
  \bigl((\rho+a)^2+\eps^2\bigr)
 \asymp (\rho-a)^2+\eps^2
$$
has constants independent of $\eps$. It follows that
$$
 |B_\eps(\rho)|
 \le C\bigl((\rho-a)^2+\eps^2\bigr)^{\beta/2}.
$$
Moreover, $a^2-\rho^2\asymp a-\rho$ for $a-\delta<\rho<a$, and hence
$$
 |B_0(\rho)|\le C|\rho-a|^\beta
 \qquad (|\rho-a|<\delta).
$$
Since $\beta>-1$, these bounds give
$$
 \sup_{0<\eps<\delta}\int_{a-\delta}^{a+\delta}
 |B_\eps(\rho)|\dd\rho
 +\int_{a-\delta}^{a+\delta}|B_0(\rho)|\dd\rho
 \le C\delta^{\beta+1}.
$$

Now let $K\subset(0,\infty)$ be compact. On
$K\setminus(a-\delta,a+\delta)$, the quantities $z_\eps(\rho)$ stay
uniformly away from zero; pointwise convergence and a uniform bound therefore
give dominated convergence. Combining this with the endpoint estimate yields
$$
 \limsup_{\eps\to0^+}
 \int_K|B_\eps(\rho)-B_0(\rho)|\dd\rho
 \le C\delta^{\beta+1}.
$$
Letting $\delta\to0^+$ proves the local $L^1$ convergence.
\end{proof}

For $\eps>0$, set
$$
 Q_{m,\eps}(x):=e^{-\eps|x|}Q_m(x).
$$
Then $Q_{m,\eps}\in L^1(\R^{2m})$. Applying \eqref{eq:radial-2m} and \eqref{eq:Km-expansion},
\begin{equation}\label{eq:Qeps-transform}
 (2\pi)^{-m}\wh Q_{m,\eps}(\rho)
 =\rho^{-\nu}\sum_{a\in E_m}q_{m,a}\Ima L_\nu(\eps-ia,\rho).
\end{equation}
Here we used $\Ima(e^{iar})=\sin(ar)$ and
$$
 r^m\chi_m(r)=K_m(r)r^{1-m}=K_m(r)r^{-\nu}.
$$

For $\nu\ge1$, the polynomial terms in \eqref{eq:Lnu-finite} cancel before the boundary value is taken. Fix $k\in\{0,\dots,\nu-1\}$ and put $p:=2\nu-1-2k$. Then $p$ is odd and $1\le p\le2m-3$. The binomial theorem gives
$$
 \Ima(\eps-ia)^p
 =\sum_{\ell=0}^{(p-1)/2}(-1)^{\ell+1}\binom p{2\ell+1}
 \eps^{p-2\ell-1}a^{2\ell+1}.
$$
After multiplication by $q_{m,a}$ and summation over $a\in E_m$, every inner sum vanishes by \eqref{eq:moment-cancel}. Thus \eqref{eq:Qeps-transform} becomes
\begin{equation}\label{eq:Qeps-main}
 (2\pi)^{-m}\wh Q_{m,\eps}(\rho)
 =\frac1{(2\nu-1)!!\,\rho^{2\nu}}
 \sum_{a\in E_m}q_{m,a}\Ima\bigl((\eps-ia)^2+\rho^2\bigr)^{\nu-\frac12}.
\end{equation}
Here and below we use the convention $(-1)!!:=1$. For $\nu=0$, the same formula follows directly from $L_0$.

For $\rho>0$ away from the finitely many odd frequencies, define
\begin{equation}\label{eq:hm}
 h_m(\rho):=\frac{(-1)^\nu}{(2\nu-1)!!\,\rho^{2\nu}}
 \sum_{\substack{a\in E_m\\a>\rho}}q_{m,a}(a^2-\rho^2)^{\nu-\frac12},
\end{equation}
Set $h_m(0):=0$. If $m\ge2$, define $h_m(a)$ for $a\in E_m$ by the
continuous extension of \eqref{eq:hm}. For $m=1$, the function $h_1$ has
integrable inverse-square-root singularities at each $a\in E_1$. We set
$h_1(a):=0$ for $a\in E_1$; this pointwise convention does not affect any
integral or distributional identity below.

The boundary-value calculation determines $\wh Q_m$ only away from the origin,
so the result could still differ there by a distribution supported at $\{0\}$.
The following standard structure lemma, together with the decay of $Q_m$, rules
out that possibility.

\begin{lemma}\label{lem:point-support}
Let $T\in\mathcal S'(\R^d)$ be supported at $\{0\}$ and invariant under every orthogonal transformation of $\R^d$. Then there are $N\ge0$ and constants $c_0,\dots,c_N$ such that
\begin{equation}\label{eq:point-support}
 T=\sum_{j=0}^N c_j\Delta^j\delta_0.
\end{equation}
\end{lemma}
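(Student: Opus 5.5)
The plan is to use the classical structure theorem for distributions supported at a point, and then impose rotation invariance on the finitely many coefficients. Since $T$ has compact support $\{0\}$, it has finite order. The standard result (e.g.\ Rudin, \emph{Functional Analysis}, Thm.~6.25) then gives $N'\ge0$ and constants $c_\beta$ with
$$
 T=\sum_{|\beta|\le N'}c_\beta\,\partial^\beta\delta_0 .
$$
It is convenient to pass to the Fourier side. With the convention \eqref{eq:FT-convention}, $\wh{\partial^\beta\delta_0}=(i\xi)^\beta$, so $\wh T=P$ is a polynomial $P(\xi)=\sum_\beta c_\beta (i\xi)^\beta$. The Fourier transform commutes with orthogonal maps: if $A\in O(d)$ and $T\circ A=T$, then $\wh T\circ A=\wh T$, because $A^{-T}=A$. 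Hence $P$ is an $O(d)$-invariant polynomial.

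Next decompose $P=\sum_k P_k$ into homogeneous parts. Each $P_k$ is again $O(d)$-invariant, since $O(d)$ acts linearly and preserves degree. A homogeneous invariant polynomial is constant on the unit sphere, because $O(d)$ acts transitively on $\Sph^{d-1}$ for $d\ge1$ (for $d=1$, $O(1)=\{\pm1\}$ suffices). So $P_k(\xi)=P_k(e_1)|\xi|^k$. For odd $k$, invariance under $\xi\mapsto-\xi$ forces $P_k(e_1)=0$; alternatively, $|\xi|^k$ is not a polynomial. For $k=2j$ we get $P_{2j}=a_j|\xi|^{2j}$. Thus $P(\xi)=\sum_{j=0}^N a_j|\xi|^{2j}$.

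To return, note $\wh{\Delta^j\delta_0}=(-|\xi|^2)^j$. Setting $c_j:=(-1)^ja_j$, the distributions $T$ and $\sum_j c_j\Delta^j\delta_0$ have the same Fourier transform. Injectivity of the Fourier transform on $\mathcal S'$ then gives \eqref{eq:point-support}.

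The only step that needs care is the invariance transfer $\wh{T\circ A}=\wh T\circ A^{-T}$. Here $T\circ A$ means the pullback $\langle T\circ A,\varphi\rangle=\langle T,\varphi\circ A^{-1}\rangle$, since $|\det A|=1$. The identity itself is a one-line change of variables in $\wh\varphi$. Everything else is routine, so no genuine obstacle is expected.
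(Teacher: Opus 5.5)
Your proposal is correct and follows essentially the same route as the paper. Both arguments use the structure theorem for point-supported distributions (the paper cites H\"ormander, Theorem~2.3.4, where you cite Rudin), pass to the $O(d)$-invariant polynomial $\wh T$, reduce to homogeneous components of the form $b_k|\xi|^k$ with odd $k$ excluded, and invert using $\wh{\Delta^j\delta_0}=(-1)^j|\xi|^{2j}$. Your explicit verification of $\wh{T\circ A}=\wh T\circ A^{-T}$ supplies a step the paper only states as ``orthogonal invariance gives $P(U\xi)=P(\xi)$.''
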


\begin{proof}
By \cite[Theorem~2.3.4]{Hormander}, $T$ is a finite linear combination of derivatives of $\delta_0$. Consequently $P:=\wh T$ is a polynomial. Orthogonal invariance gives $P(U\xi)=P(\xi)$ for all $U\in O(d)$. Write $P=\sum P_k$ as a sum of homogeneous components. Substituting $t\xi$ in the invariance identity and comparing the coefficients of the polynomial in $t$ gives
$$
 P_k(U\xi)=P_k(\xi),\qquad U\in O(d),\quad k\ge0.
$$
Thus every $P_k$ is orthogonally invariant. Since $O(d)$ acts transitively on the unit sphere, $P_k$ is constant there, so $P_k(\xi)=b_k|\xi|^k$ for $\xi\ne0$. Reflection invariance forces $P_k=0$ for odd $k$. Hence
$$
 P(\xi)=\sum_j b_{2j}|\xi|^{2j}.
$$
Under \eqref{eq:FT-convention},
$$
 \wh{\Delta^j\delta_0}(\xi)=(-1)^j|\xi|^{2j},
$$
and inversion proves \eqref{eq:point-support}.
\end{proof}

\begin{theorem}\label{thm:Q-transform}
The representative $h_m$ just defined belongs to $L^1_{\mathrm{loc}}([0,\infty))$, extends continuously to $\rho=0$ with value $0$, and satisfies
$$
 h_m(\rho)=O(\rho^2)\qquad(\rho\to0^+),
$$
and vanishes for $\rho>2m+1$. As a tempered distribution on $\R^{2m}$,
\begin{equation}\label{eq:Q-transform}
 \wh Q_m(\xi)=(2\pi)^m h_m(|\xi|).
\end{equation}
In particular, $\wh Q_m$ has no singular part at the origin or on the spheres $|\xi|=a$, $a\in E_m$.
\end{theorem}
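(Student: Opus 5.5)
The plan is to take the distributional limit of \eqref{eq:Qeps-main} as $\eps\to0^+$, identify it with $(2\pi)^m h_m$ away from the origin, and then rule out a point-supported correction with Lemma~\ref{lem:point-support}.

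\emph{Step 1: limit away from the origin.} Since $Q_{m,\eps}\to Q_m$ pointwise with $|Q_{m,\eps}|\le|Q_m|$ and $Q_m$ is bounded, dominated convergence gives $Q_{m,\eps}\to Q_m$ in $\mathcal S'$. Hence $\wh Q_{m,\eps}\to\wh Q_m$ in $\mathcal S'$. Now take a test function $\varphi$ supported in $\{0<|\xi|<R\}$. On this region $\rho^{-2\nu}$ is bounded. Apply Lemma~\ref{lem:boundary} with $\alpha=\nu-\tfrac12>-1$ to each $a\in E_m$. Since $-\sin(\pi(\nu-\tfrac12))=(-1)^\nu$, the right side of \eqref{eq:Qeps-main} converges in $L^1_{\mathrm{loc}}(0,\infty)$, in the radial variable, to $h_m$. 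So $\wh Q_m=(2\pi)^m h_m(|\xi|)$ on $\R^{2m}\setminus\{0\}$. The bounds in Lemma~\ref{lem:boundary} also give local integrability on compact subsets of $(0,\infty)$. For $m\ge2$ the exponent $\nu-\tfrac12\ge\tfrac12$, so $h_m$ is even continuous at each $a\in E_m$. The vanishing of $h_m$ for $\rho>2m+1$ is immediate from \eqref{eq:hm}, because the sum is empty there.

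\emph{Step 2: behaviour at $\rho=0$.} This is where Lemma~\ref{lem:cancellations} is used. For $\rho<1$ all $a\in E_m$ are counted in the sum. Expand
\[
(a^2-\rho^2)^{\nu-\frac12}=\sum_{k\ge0}\binom{\nu-\frac12}{k}(-\rho^2)^k a^{2\nu-1-2k}.
\]
For $k=0,\dots,\nu-1$ the powers $a^{2\nu-1-2k}$ are odd powers $a^j$ with $1\le j\le 2m-3$, so their weighted sums vanish by \eqref{eq:moment-cancel}. The term $k=\nu$ gives $a^{-1}$, which vanishes by \eqref{eq:inverse-moment}. The first surviving term is therefore of order $\rho^{2\nu+2}$. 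Dividing by $\rho^{2\nu}$ leaves $h_m(\rho)=O(\rho^2)$, with a continuous extension equal to $0$ at the origin. For $m=1$ only \eqref{eq:inverse-moment} is needed. This gives $h_m\in L^1_{\mathrm{loc}}([0,\infty))$. Since $h_m$ is also compactly supported, $(2\pi)^m h_m(|\xi|)$ is an integrable function on $\R^{2m}$ and hence a tempered distribution.

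\emph{Step 3: no point mass at the origin.} Put $T:=\wh Q_m-(2\pi)^m h_m(|\cdot|)$. By Step 1, $T$ is supported in $\{0\}$. It is orthogonally invariant because $Q_m$ and $h_m(|\cdot|)$ are radial. Lemma~\ref{lem:point-support} then gives $T=\sum_j c_j\Delta^j\delta_0$, so its inverse transform is a polynomial $P(x)=\sum_j c_j'|x|^{2j}$. On the other hand, $h_m(|\cdot|)\in L^1$, so its inverse transform is continuous and tends to $0$ at infinity by Riemann--Lebesgue. Also $Q_m\to0$ at infinity. Hence $P$ is a polynomial tending to $0$ at infinity, so $P\equiv0$ and $T=0$. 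This proves \eqref{eq:Q-transform}. Because $h_m$ is locally integrable on the spheres $|\xi|=a$ as well, the transform has no singular part anywhere.

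The main obstacle is Step 2. One has to check that the generalized binomial expansion lines up exactly with the index ranges in \eqref{eq:moment-cancel} and \eqref{eq:inverse-moment}, including the degenerate case $m=1$. Step 3 is routine once that is in place.
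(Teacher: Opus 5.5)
Your proposal is correct and matches the paper's proof step for step. Both use the same three ingredients: the binomial expansion with the cancellations \eqref{eq:moment-cancel} and \eqref{eq:inverse-moment} to get $h_m(\rho)=O(\rho^2)$; termwise use of Lemma~\ref{lem:boundary} against test functions supported away from the origin; and removal of a point-supported remainder via Lemma~\ref{lem:point-support} plus a Riemann--Lebesgue/$C_0$ argument. The only difference is that you present the local analysis of $h_m$ and the distributional limit in the opposite order, which does not matter.
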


\begin{proof}
We first establish the stated properties of $h_m$. If $0<\rho<1$, then
every $a\in E_m$ satisfies $a>\rho$, and
$$
 (a^2-\rho^2)^{\nu-\frac12}
 =\sum_{j=0}^\infty(-1)^j\binom{\nu-\frac12}{j}
 a^{2\nu-1-2j}\rho^{2j}.
$$
The series converges uniformly for $0\le\rho\le r<1$. For
$j=0,\dots,\nu-1$, the exponent $2\nu-1-2j$ is one of
$2m-3,2m-5,\dots,1$, so \eqref{eq:moment-cancel} annihilates the
corresponding coefficient. For $j=\nu$, the exponent is $-1$, and
\eqref{eq:inverse-moment} applies. Thus the numerator in \eqref{eq:hm} is
$O(\rho^{2\nu+2})$, and hence
$$
 h_m(\rho)=O(\rho^2)\qquad(\rho\to0^+).
$$
The assignment $h_m(0)=0$ therefore makes $h_m$ continuous at the origin.

At a nonzero endpoint $a\in E_m$, the only potentially singular summand
has the form $(a^2-\rho^2)_+^{\nu-1/2}$, which is locally integrable because
$\nu-1/2>-1$. When $m=1$, this is an integrable inverse-square-root
singularity; no continuity at the endpoint is required. When $m\ge2$,
the exponent is positive, so this summand tends to zero and the extension
selected above is continuous at the endpoint. For
$\rho>\max E_m$, the sum in \eqref{eq:hm} is empty, and
$\max E_m\le2m+1$. It follows that $h_m\in L^1_{\mathrm{loc}}([0,\infty))$,
that $h_m$ vanishes for $\rho>2m+1$, and that
$$
 \int_{\R^{2m}}|h_m(|\xi|)|\dd\xi
 =\omega_{2m-1}\int_0^{2m+1}|h_m(\rho)|\rho^{2m-1}\dd\rho<\infty.
$$

We next identify the Fourier transform away from the origin. Since $Q_m$ is
bounded and $Q_{m,\eps}\to Q_m$ pointwise, dominated convergence against
Schwartz functions gives
$$
 Q_{m,\eps}\longrightarrow Q_m\quad\text{in }\mathcal S',
 \qquad
 \wh Q_{m,\eps}\longrightarrow\wh Q_m\quad\text{in }\mathcal S'.
$$
Let $\varphi\in C_c^\infty(\R^{2m}\setminus\{0\})$ and set
$$
 A_\varphi(\rho):=
 \int_{\Sph^{2m-1}}\varphi(\rho\omega)\dd\omega.
$$
Because $\supp\varphi$ is a compact subset of
$\R^{2m}\setminus\{0\}$, the function $A_\varphi$ is smooth and supported
in a compact interval contained in $(0,\infty)$. Polar coordinates and
\eqref{eq:Qeps-main} give
$$
 \langle\wh Q_{m,\eps},\varphi\rangle
 =\frac{(2\pi)^m}{(2\nu-1)!!}
 \int_0^\infty \rho^{2m-1-2\nu}A_\varphi(\rho)
 \sum_{a\in E_m}q_{m,a}
 \Ima\bigl((\eps-ia)^2+\rho^2\bigr)^{\nu-\frac12}\dd\rho.
$$
Here $2m-1-2\nu=1$, so the remaining weight is
$\rho A_\varphi(\rho)$. Lemma~\ref{lem:boundary}, applied term by term with
$\alpha=\nu-1/2$, permits passage to the limit. Since
$$
 -\sin\bigl(\pi(\nu-\tfrac12)\bigr)=(-1)^\nu,
$$
the limit is precisely the function defined in \eqref{eq:hm}. Therefore
$$
 \langle\wh Q_m,\varphi\rangle
 =(2\pi)^m\int_{\R^{2m}}h_m(|\xi|)\varphi(\xi)\dd\xi.
$$
Thus \eqref{eq:Q-transform} holds on $\R^{2m}\setminus\{0\}$; in
particular, no additional distribution supported on a sphere
$|\xi|=a$, $a\in E_m$, appears.

It remains only to exclude a term supported at the origin. Define
$$
 T:=\wh Q_m-(2\pi)^mh_m(|\cdot|)
$$
as a tempered distribution. The preceding identification shows that $T$ is
supported at $\{0\}$, and both terms are rotation invariant. Hence
Lemma~\ref{lem:point-support} gives
\begin{equation}\label{eq:T-origin}
 T=\sum_{j=0}^N c_j\Delta^j\delta_0.
\end{equation}
Applying the inverse Fourier transform and using
\[
 \mathcal{F}^{-1}(\Delta_\xi^j\delta_0)
 =(2\pi)^{-2m}(-1)^j|x|^{2j},
\]
we obtain
$$
Q_m(x)-(2\pi)^m\mathcal{F}^{-1}
\bigl[h_m(|\cdot|)\bigr](x)
=
(2\pi)^{-2m}\sum_{j=0}^{N}c_j(-1)^j|x|^{2j}.
$$
Since $Q_m(x)\to0$ and $h_m(|\cdot|)\in L^1(\mathbb{R}^{2m})$, the
Riemann--Lebesgue lemma shows that the expression on the left belongs
to $C_0(\mathbb{R}^{2m})$. The expression on the right is a polynomial
in $|x|^2$ and can belong to $C_0(\mathbb{R}^{2m})$ only if it vanishes
identically. Thus $c_j=0$ for every $j$, which proves
\eqref{eq:Q-transform}.
\end{proof}

\begin{remark}\label{rem:even-odd}
	The difference between even and odd dimensions becomes transparent from Lemma~\ref{lem:boundary}. When \(0<\rho<a\),
	\[
	\Ima\bigl((\varepsilon-ia)^2+\rho^2\bigr)^\alpha
	\longrightarrow
	-\sin(\pi\alpha)(a^2-\rho^2)^\alpha.
	\]
	In even dimension, \(\alpha=\nu-\tfrac12\) is a half-integer, so the sine factor is nonzero and produces \(h_m\). In odd dimension, the corresponding exponent is an integer, so the sine factor is zero. Thus the term used to construct the even-dimensional counterexample disappears in odd dimensions.
    This comparison only explains why the two cases differ; the odd-dimensional assertion itself is
    supplied by Theorem~\ref{thm:odd}.
\end{remark}

\subsection{Negativity near the origin}\label{sub4.3}
We determine the sign of $h_m$ near the origin. By \eqref{eq:moment-cancel} and \eqref{eq:inverse-moment}, every term through order $\rho^{2\nu}$ in the numerator of \eqref{eq:hm} cancels. The first potentially nonzero term is determined by
$$
 M_m:=\sum_{a\in E_m}\frac{q_{m,a}}{a^3}.
$$
Since $E_m$ is finite and every $a\in E_m$ satisfies $a\ge1$, the binomial
expansion used in the proof of Theorem~\ref{thm:Q-transform} converges
uniformly for $0\le\rho\le r<1$, and the two summations may be interchanged:
\begin{equation*}
 h_m(\rho)=\frac{(-1)^\nu}{(2\nu-1)!!\,\rho^{2\nu}}
 \sum_{j=0}^\infty(-1)^j\binom{\nu-\frac12}{j}
 \left(\sum_{a\in E_m}q_{m,a}a^{2\nu-1-2j}\right)\rho^{2j}.
\end{equation*}
For $j=0,\dots,\nu-1$, the inner sum vanishes by
\eqref{eq:moment-cancel}, and for $j=\nu$ it vanishes by
\eqref{eq:inverse-moment}. Thus the first potentially nonzero index is
$j=\nu+1=m$, where the inner sum is $M_m$. By \eqref{eq:binom-id} with
$k=1$,
$$
 \frac1{(2\nu-1)!!}\binom{\nu-\frac12}{\nu+1}
 =-\frac1{2^{\nu+1}(\nu+1)!}.
$$
Since $(-1)^\nu(-1)^{\nu+1}=-1$, the contribution of this term to the
preceding expansion is
$$
 \frac{M_m}{2^{\nu+1}(\nu+1)!}\rho^2
 =\frac{M_m}{2^m m!}\rho^2.
$$
Finally, for $0\le\rho\le1/2$, the remaining part of the numerator in the
preceding expansion is bounded in absolute value by
$$
 \rho^{2\nu+4}\sum_{a\in E_m}|q_{m,a}|
 \sum_{j=\nu+2}^\infty
 \left|\binom{\nu-\frac12}{j}\right|
 a^{2\nu-1-2j}4^{\nu+2-j}
 =O(\rho^{2\nu+4}),
$$
where the series is convergent. After division by $\rho^{2\nu}$, this
remainder is $O(\rho^4)$ uniformly on $[0,1/2]$. Therefore
\begin{equation}\label{eq:hm-expansion}
 h_m(\rho)=\frac{M_m}{2^m m!}\rho^2+O(\rho^4).
\end{equation}

\begin{lemma}\label{lem:Mm}
For every $m\ge1$,
\begin{equation}\label{eq:Mm-closed}
 M_m=-\frac{W_m}{2m(2m+1)}<0,
 \qquad
 W_m:=\int_0^{\pi/2}\sin^{2m+1}u\dd u
 =\frac{4^m(m!)^2}{(2m+1)!}.
\end{equation}
\end{lemma}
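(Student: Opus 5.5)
The plan is to express $M_m$ as an integral of $K_m$ against a kernel that picks out $a^{-3}$ on odd frequencies, and then to evaluate that integral in closed form by integrating by parts. The kernel is $t(\pi-t)$. Its Fourier sine series on $[0,\pi]$ is $\frac8\pi\sum_{a\text{ odd}}\frac{\sin(at)}{a^3}$. Equivalently, a direct computation gives, for every odd $a\ge1$,
$$
 \int_0^\pi t(\pi-t)\sin(at)\dd t=\frac4{a^3}.
$$
Every $a\in E_m$ is odd, and \eqref{eq:Km-expansion} is a finite sum, so we may integrate termwise to get
$$
 M_m=\sum_{a\in E_m}\frac{q_{m,a}}{a^3}=\frac14\int_0^\pi t(\pi-t)K_m(t)\dd t.
$$
This is the analogue of the argument for \eqref{eq:inverse-moment} in Lemma~\ref{lem:cancellations}, where the constant weight produced $2/a$.

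Next I would write $K_m$ as an exact derivative. Differentiating gives
$$
 \frac{d}{dt}\bigl(\cos t\,\sin^{2m}t\bigr)=-\sin^{2m+1}t+2m\cos^2t\,\sin^{2m-1}t=2m\sin^{2m-1}t-(2m+1)\sin^{2m+1}t.
$$
Comparing with \eqref{eq:Km-sinepowers}, this yields $K_m(t)=\frac1{2m}\frac{d}{dt}\bigl(\cos t\sin^{2m}t\bigr)$. Integrating by parts once, the boundary terms vanish because $t(\pi-t)$ vanishes at both endpoints. We obtain
$$
 \int_0^\pi t(\pi-t)K_m(t)\dd t=-\frac1{2m}\int_0^\pi(\pi-2t)\cos t\,\sin^{2m}t\dd t.
$$

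For the second integration by parts I would use $\cos t\sin^{2m}t=\frac1{2m+1}\frac{d}{dt}\sin^{2m+1}t$. Here $\sin^{2m+1}t$ vanishes at $0$ and $\pi$, so again there are no boundary terms, and the derivative of $\pi-2t$ is $-2$. This gives
$$
 \int_0^\pi(\pi-2t)\cos t\,\sin^{2m}t\dd t=\frac{2}{2m+1}\int_0^\pi\sin^{2m+1}t\dd t=\frac{4W_m}{2m+1},
$$
using the symmetry $t\mapsto\pi-t$ to reduce to $[0,\pi/2]$. Combining the three displays,
$$
 M_m=\frac14\cdot\left(-\frac1{2m}\right)\cdot\frac{4W_m}{2m+1}=-\frac{W_m}{2m(2m+1)}.
$$
The value of $W_m$ is the beta integral already quoted in Section~3, and $W_m>0$ gives $M_m<0$.

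The only step needing care is the kernel identity $\int_0^\pi t(\pi-t)\sin(at)\dd t=4/a^3$ for odd $a$. It follows from two integrations by parts, using $\cos(a\pi)=-1$ for odd $a$. Once it is in place, the remaining computation is the exact-derivative observation for $K_m$, and no obstacle is expected.
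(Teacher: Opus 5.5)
Your proof is correct and follows the same strategy as the paper: pair $K_m$ with a quadratic weight, write $K_m=\frac1{2m}\frac{d}{dt}\bigl(\cos t\,\sin^{2m}t\bigr)$ as an exact derivative, and integrate by parts twice down to $\int_0^\pi\sin^{2m+1}t\dd t=2W_m$. The only difference is your weight $t(\pi-t)$ in place of the paper's $t^2$. Its odd-frequency sine moments are exactly $4/a^3$ with no $\pi^2/a$ term, so you need neither the inverse-moment identity \eqref{eq:inverse-moment} nor the centering-by-symmetry step the paper uses, which makes your version slightly more self-contained.
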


\begin{proof}
For every odd positive integer $a$, two integrations by parts give
\begin{equation}\label{eq:t2-sin}
 \int_0^\pi t^2\sin(at)\dd t=\frac{\pi^2}{a}-\frac4{a^3}.
\end{equation}
Using \eqref{eq:inverse-moment} and \eqref{eq:Km-expansion},
\begin{equation}\label{eq:Mm-integral}
 \int_0^\pi t^2K_m(t)\dd t=-4M_m.
\end{equation}
The function $K_m$ is symmetric about $\pi/2$, and \eqref{eq:inverse-moment} gives $\int_0^\pi K_m(t)\dd t=0$. Symmetry also gives
$$
 \int_0^\pi\left(t-\frac\pi2\right)K_m(t)\dd t=0.
$$
Expanding the square and using these two identities yields
\begin{equation}\label{eq:centered}
 \int_0^\pi t^2K_m(t)\dd t
 =\int_0^\pi\left(t-\frac\pi2\right)^2K_m(t)\dd t.
\end{equation}
Set $x=t-\pi/2$. Formula \eqref{eq:Km-sinepowers} becomes
\begin{equation}\label{eq:Km-centered}
 K_m\left(x+\frac\pi2\right)
 =\frac{(2m+1)\sin^2x-1}{2m}\cos^{2m-1}x
 =-\frac1{2m}\frac{d}{dx}\bigl(\sin x\cos^{2m}x\bigr).
\end{equation}
Substituting \eqref{eq:Km-centered} into \eqref{eq:centered} and integrating by parts, with vanishing boundary terms at $x=\pm\pi/2$, gives
\begin{align}
 \int_0^\pi t^2K_m(t)\dd t
 &=\frac1m\int_{-\pi/2}^{\pi/2}x\sin x\cos^{2m}x\dd x \notag\\
 &=\frac1{m(2m+1)}\int_{-\pi/2}^{\pi/2}\cos^{2m+1}x\dd x \notag\\
 &=\frac{2W_m}{m(2m+1)}.\label{eq:Km-second-moment}
\end{align}
Combining \eqref{eq:Mm-integral} with \eqref{eq:Km-second-moment}, we obtain \eqref{eq:Mm-closed}.
\end{proof}

\begin{theorem}\label{thm:negative-hm}
For every $m\ge1$,
\begin{equation}\label{eq:hm-leading}
 h_m(\rho)=-c_m\rho^2+O(\rho^4)\qquad(\rho\to0^+),
 \qquad
 c_m:=\frac{2^{m-1}(m-1)!}{(2m+1)(2m+1)!}>0.
\end{equation}
Consequently there exists $\eps_m>0$ such that
$$
 h_m(\rho)<0,\qquad 0<\rho<\eps_m.
$$
\end{theorem}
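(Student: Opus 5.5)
The statement follows by combining the expansion \eqref{eq:hm-expansion} with the closed form for $M_m$ in Lemma~\ref{lem:Mm}. All that remains is to identify the constant $c_m$ and then deduce the sign on a small interval.

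By \eqref{eq:hm-expansion} we have $h_m(\rho)=\frac{M_m}{2^m m!}\rho^2+O(\rho^4)$, with the $O(\rho^4)$ term uniform on $[0,1/2]$. Substituting $M_m=-W_m/(2m(2m+1))$ and $W_m=4^m(m!)^2/(2m+1)!$ gives the leading coefficient
$$
 -\frac{4^m(m!)^2}{2^m m!\cdot 2m(2m+1)\,(2m+1)!}
 =-\frac{2^m\, m!}{2m(2m+1)(2m+1)!}.
$$
Since $m!/(2m)=(m-1)!/2$, this simplifies to
$$
 -\frac{2^{m-1}(m-1)!}{(2m+1)(2m+1)!}=-c_m .
$$
This is a routine simplification, and the only thing to check is the factor $2^m/2=2^{m-1}$. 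It follows that $c_m>0$, because every factor in it is positive.

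For the sign conclusion, \eqref{eq:hm-expansion} supplies a constant $C$ with $|h_m(\rho)+c_m\rho^2|\le C\rho^4$ for $0\le\rho\le 1/2$. Then
$$
 h_m(\rho)\le-\rho^2(c_m-C\rho^2)<0
 \qquad\text{whenever } 0<\rho<\min\bigl\{1/2,\sqrt{c_m/(2C)}\bigr\}.
$$
So we may take $\eps_m$ to be this minimum.

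No step here is a real obstacle, since the analytic work is already contained in \eqref{eq:hm-expansion} and Lemma~\ref{lem:Mm}. The one point to be careful about is that the remainder bound is genuinely uniform near $0$, and not merely an asymptotic statement along some sequence. That uniformity was established on $[0,1/2]$ in the derivation of \eqref{eq:hm-expansion}, and this is exactly what makes $\eps_m$ explicit in terms of $c_m$ and $C$.
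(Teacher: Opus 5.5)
Your proposal is correct and matches the paper's proof: you substitute the closed form of $M_m$ from Lemma~\ref{lem:Mm} into \eqref{eq:hm-expansion} to identify $c_m$, and then use the uniformity of the $O(\rho^4)$ remainder on $[0,1/2]$ to get $h_m(\rho)\le-\tfrac{c_m}{2}\rho^2<0$ on a small interval. Your explicit choice $\eps_m=\min\{1/2,\sqrt{c_m/(2C)}\}$, with the harmless normalization $C>0$, is simply a concrete form of the paper's existence statement.
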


\begin{proof}
Substituting \eqref{eq:Mm-closed} into \eqref{eq:hm-expansion} gives
$$
 c_m=\frac{W_m}{2^{m+1}m\,m!(2m+1)}
 =\frac{2^{m-1}(m-1)!}{(2m+1)(2m+1)!}.
$$
Since the remainder in \eqref{eq:hm-expansion} is $O(\rho^4)$ uniformly for
$0\le\rho\le1/2$, there exists $\eps_m\in(0,1/2]$ such that
$$
 h_m(\rho)\le-\frac{c_m}{2}\rho^2<0,
 \qquad 0<\rho<\eps_m.
$$
\end{proof}

\subsection{Localization and completion}\label{sub4.4}
The negativity of $h_m$ yields the desired compactly supported function. The following approximation lemma prescribes the support in physical space while keeping the Fourier transform sufficiently close to a chosen profile on a fixed annulus.

\begin{lemma}\label{lem:localize}
Let $H\in L^1(\R^d)$ be real, radial, compactly supported, and suppose that $H<0$ almost everywhere on a nonempty radial open annulus
$$
 \mathcal U:=\{\xi:r_0<|\xi|<r_1\}.
$$
Then, for every $R>0$, there exists a nonzero real radial $g\in C_c^\infty(\R^d)$ with $\supp g\subset B_R$ such that
$$
 \int_{\R^d}|\wh g(\xi)|^2H(\xi)\dd\xi<0.
$$
\end{lemma}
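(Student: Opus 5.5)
The plan is a scaling argument: build a test function whose Fourier transform squared concentrates on the annulus $\mathcal U$, then make its physical support small by dilating a fixed bump. Directly prescribing $\wh g$ near $\mathcal U$ and $\supp g\subset B_R$ at the same time is impossible for a fixed smooth cutoff. So I would first produce any $g$ that works, of arbitrary support, and then argue by approximation. Since $H$ is fixed and in $L^1$, the functional
$$
 \Lambda(g):=\int_{\R^d}|\wh g(\xi)|^2H(\xi)\dd\xi
$$
is continuous with respect to the sup norm of $\wh g$. It satisfies $|\Lambda(g)-\Lambda(g')|\le \|H\|_{L^1}\,\|\wh g-\wh g'\|_\infty(\|\wh g\|_\infty+\|\wh g'\|_\infty)$, and $\|\wh g\|_\infty\le\|g\|_{L^1}$.

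\emph{Step 1: a Schwartz radial function with $\Lambda<0$.} I would pick a real radial $\psi\in C_c^\infty(\R^d)$, $\psi\ge0$, not identically zero, with $\supp\psi\subset\mathcal U$. Then I set $g_0:=\mathcal F^{-1}\psi$, which is real and radial because $\psi$ is real and radial, hence even. This $g_0$ is a Schwartz function and $\wh{g_0}=\psi$. Since $H<0$ a.e.\ on $\mathcal U$ and $\psi^2>0$ on a set of positive measure inside $\mathcal U$,
$$
 \Lambda(g_0)=\int_{\mathcal U}\psi^2H\dd\xi<0.
$$

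\emph{Step 2: truncate in physical space.} I let $\eta\in C_c^\infty(\R^d)$ be radial with $\eta=1$ on $B_1$ and $0\le\eta\le1$, and put $g_L:=\eta(\cdot/L)g_0$. Then $g_L\to g_0$ in $L^1$ as $L\to\infty$, so $\wh{g_L}\to\wh{g_0}$ uniformly. By the continuity estimate, $\Lambda(g_L)<0$ for some large $L$. This $g_1:=g_L$ is a real radial $C_c^\infty$ function with support in some ball $B_{R_1}$, but $R_1$ may exceed $R$.

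\emph{Step 3: shrink the support.} This is the main obstacle, since dilating $g_1$ moves its spectrum away from $\mathcal U$. The fix I would try first uses the hypothesis that $H$ is negative only on $\mathcal U$ but enters $\Lambda$ globally. Dilation alone cannot work for a general $H$ with fixed annulus. Instead I would exploit analyticity: for $g$ compactly supported in $B_R$, the functions $\wh g$ form a dense subspace in the sense needed. More precisely, I would show that the set $\{\wh g: g\in C_c^\infty(B_R)\text{ real radial}\}$ is dense in $C(\overline{B_{r_2}})$-radial functions, uniformly on the compact ball $\overline{B_{r_2}}\supset\supp H$, where $r_2$ is chosen so that $\supp H\subset B_{r_2}$.

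Density can be proved by Hahn--Banach. A finite radial measure $\mu$ on $\overline{B_{r_2}}$ annihilating all such $\wh g$ has an inverse transform that vanishes on $B_R$. That transform is real-analytic (entire), because $\mu$ has compact support, so it vanishes identically. Hence $\mu=0$.

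With density in hand, I would approximate $\psi$ uniformly on $\overline{B_{r_2}}$ by $\wh g$ with $g\in C_c^\infty(B_R)$. Radiality can be restored by averaging over $O(d)$, which preserves support and reality. Since $H$ is supported in $\overline{B_{r_2}}$ and lies in $L^1$, the continuity estimate, now with sup norms taken only over $\overline{B_{r_2}}$, gives $\Lambda(g)<0$. In particular $g\ne0$. The approximation in Steps 1--2 then becomes unnecessary except for choosing the target $\psi$.
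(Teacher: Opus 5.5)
Your proof is correct, but it proves the key step by a different route from the paper's.

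\textbf{What the two arguments share.} Both reduce to the same density fact. On a ball $\overline{B_A}\supset\supp H$, Fourier transforms of real radial $C_c^\infty$ functions supported in $B_R$ can approximate a chosen radial bump supported in $\mathcal U$ uniformly. After that, $H\in L^1$ lets the quadratic integral pass to the limit.

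\textbf{How each obtains the density.} You get it abstractly. You use Hahn--Banach and Riesz representation, together with the fact that the Fourier transform of a compactly supported measure is entire, so it cannot vanish on $B_R$ unless the measure is zero. The paper builds the approximants explicitly. It fixes a bump $\eta_\delta$ with $\delta<R$ so small that $\wh\eta_\delta>1/2$ on $B_A$. It then approximates $u/\wh\eta_\delta$ by polynomials in $|\xi|^2$ (Weierstrass) and takes $g_j=P_j(-\Delta)\eta_\delta$. This $g_j$ stays supported in $B_\delta$ and has transform $P_j(|\xi|^2)\wh\eta_\delta$.

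\textbf{What each buys.} The paper's route is elementary and constructive: it needs only Weierstrass approximation and continuity of $\wh\eta_0$ at the origin, and it produces explicit test functions. Yours is conceptually shorter and gives a more general density statement (non-radial targets, arbitrary compact frequency sets). The price is the duality machinery and the lack of an explicit $g$.

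\textbf{Two small fixes.}
\begin{enumerate}
\item Your Steps~1--2 are never used and can be deleted; only the choice of $\psi$ survives.
\item In the duality step, add a line explaining why the annihilating measure may be taken rotation-invariant. Either apply Hahn--Banach in the radial subspace, identified with $C([0,r_2])$, or average a representing measure over $O(d)$. This is what lets you conclude, from testing against radial $g$ alone, that $\hat\mu$ vanishes on $B_R$.
\end{enumerate}
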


\begin{proof}
Choose $A>r_1$ so large that $\supp H\subset B_A$. Let $\eta_0\in C_c^\infty(\R^d)$ be real and radial, with $\supp\eta_0\subset B_1$ and $\int\eta_0=1$, and set
$$
 \eta_\delta(x):=\delta^{-d}\eta_0(x/\delta).
$$
Then
$$
 \wh\eta_\delta(\xi)=\wh\eta_0(\delta\xi),\qquad \wh\eta_0(0)=1.
$$
Since $\wh\eta_0$ is continuous at the origin,
$$
 \lim_{\delta\to0^+}\sup_{|\xi|\le A}|\wh\eta_\delta(\xi)-1|=0.
$$
Because $\eta_0$ is real and radial, $\wh\eta_0$ is real and radial. We may therefore fix $0<\delta<R$ such that
\begin{equation}\label{eq:eta-positive}
 \wh\eta_\delta(\xi)>\frac12,
 \qquad |\xi|\le A.
\end{equation}

Choose a nonzero continuous $u:[0,\infty)\to\R$ with compact support in $(r_0,r_1)$. The set on which $u$ is nonzero contains a nonempty open interval, so $|u(|\xi|)|^2H(\xi)<0$ on a set of positive measure. The product is nonpositive on $\mathcal U$ and vanishes identically outside $\mathcal U$, since $u$ is supported in $(r_0,r_1)$. Since $u$ is bounded and $H\in L^1$, the integral is finite and
\begin{equation}\label{eq:u-negative}
 \int_{\R^d}|u(|\xi|)|^2H(\xi)\dd\xi<0.
\end{equation}
By \eqref{eq:eta-positive},
$$
 v(\rho):=\frac{u(\rho)}{\wh\eta_\delta(\rho)},\qquad 0\le\rho\le A,
$$
is continuous. Since $u$ vanishes near the origin, $s\mapsto v(\sqrt{s})$ is continuous on $[0,A^2]$. By the Weierstrass theorem, there are real polynomials $P_j$ such that
\begin{equation}\label{eq:poly-approx}
 \lim_{j\to\infty}\sup_{0\le\rho\le A}|P_j(\rho^2)-v(\rho)|=0.
\end{equation}
Set
$$
 g_j:=P_j(-\Delta)\eta_\delta.
$$
Differentiation does not enlarge support; because each $P_j$ has real coefficients, $P_j(-\Delta)$ preserves radiality and real-valuedness. Thus $g_j\in C_c^\infty(\R^d)$ and $\supp g_j\subset B_\delta\subset B_R$. Moreover,
$$
 \wh g_j(\xi)=P_j(|\xi|^2)\wh\eta_\delta(\xi).
$$
Since $\wh\eta_\delta$ is bounded on $B_A$, \eqref{eq:poly-approx} implies
$$
 \sup_{|\xi|\le A}|\wh g_j(\xi)-u(|\xi|)|\longrightarrow0.
$$
Let this supremum be $M_j$. Then
$$
 \bigl||\wh g_j(\xi)|^2-|u(|\xi|)|^2\bigr|
 \le M_j\bigl(2\|u\|_{L^\infty[0,A]}+M_j\bigr)
$$
for $|\xi|\le A$. Since $H$ is supported in $B_A$,
$$
 \left|\int_{\R^d}\bigl(|\wh g_j(\xi)|^2-|u(|\xi|)|^2\bigr)H(\xi)\dd\xi\right|
 \le M_j\bigl(2\|u\|_{L^\infty[0,A]}+M_j\bigr)\|H\|_1\to0.
$$
By \eqref{eq:u-negative}, the integral involving $g_j$ is negative for all sufficiently large $j$. Thus, for all sufficiently large $j$, the function $g_j$ has the required property. It is nonzero because the quadratic integral is strictly negative.
\end{proof}

\begin{proof}[Proof of Theorem~\ref{thm:main}]
By Theorem~\ref{thm:negative-hm}, $h_m(\rho)<0$ for $0<\rho<\eps_m$. Fix
$$
 0<r_0<r_1<\eps_m,\qquad 0<R<\frac\theta2,
$$
and set $H(\xi):=h_m(|\xi|)$. By Theorem~\ref{thm:Q-transform}, $H\in L^1(\R^{2m})$ is real, radial, and compactly supported. Lemma~\ref{lem:localize} gives a nonzero real radial $g\in C_c^\infty(\R^{2m})$, supported in $B_R$, such that
$$
 \int_{\R^{2m}}|\wh g(\xi)|^2h_m(|\xi|)\dd\xi<0.
$$
Since $g$ is real and radial, $\wt g=g$. Put $F:=g*\wt g$. Then $F\in C_c^\infty(\R^{2m})$ is real and radial, and
\begin{equation}\label{eq:F-support-general}
 \supp F\subset B_R+B_R=B_{2R}\subset B_\theta.
\end{equation}
Moreover,
$$
 F(0)=\|g\|_2^2>0,\qquad \wh F(\xi)=|\wh g(\xi)|^2\ge0.
$$
The Gram identity proves positive definiteness, and Lemma~\ref{lem:autocorr} gives strict positive definiteness.

Because $F\in\mathcal S(\R^{2m})$, set
$$
 \varphi(\xi):=(2\pi)^{-2m}\wh F(-\xi)\in\mathcal S(\R^{2m}).
$$
Fourier inversion gives $\wh\varphi=F$. Since $Q_m$ is a tempered
distribution and Theorem~\ref{thm:Q-transform} identifies its Fourier
transform with the integrable function $(2\pi)^mh_m(|\cdot|)$,
distributional Fourier duality under \eqref{eq:FT-convention} gives
\begin{align*}
 \int_{\R^{2m}}F(x)Q_m(x)\dd x
 &=\langle Q_m,\wh\varphi\rangle
 =\langle\wh Q_m,\varphi\rangle\\
 &=(2\pi)^{-m}\int_{\R^{2m}}\wh F(-\xi)h_m(|\xi|)\dd\xi\\
 &=(2\pi)^{-m}\int_{\R^{2m}}|\wh g(\xi)|^2h_m(|\xi|)\dd\xi<0.
\end{align*}
The Fourier-side integral is absolutely convergent because $\wh F$ is bounded and $h_m(|\cdot|)\in L^1$.
Let $F_0(r):=F(re_1)$. By \eqref{eq:pairing}, $\omega_{2m-1}>0$, and \eqref{eq:F-support-general},
$$
 \int_0^\theta F_0(r)R_2^{m-\frac12}(\cos r)(\sin r)^{2m-1}\dd r<0.
$$
Finally define $f(r):=F_0(r)/F(0)$. Positive definiteness of $F$ implies $f\in\Phi_{2m}$, while $f(0)=1$ and $\supp f\subset[0,2R]\subset[0,\theta]$. Dividing the preceding integral by $F(0)>0$ proves \eqref{eq:main-coeff}. Since $\supp f\subset[0,\theta]\subset[0,\pi]$, the integral in \eqref{eq:main-coeff} is exactly $I_{2m,2}(f|_{[0,\pi]})$. Hence \eqref{eq:schoenberg-sign} gives $f|_{[0,\pi]}\notin\Psi_{2m}$.
\end{proof}

\begin{remark}\label{rem:m1}
For $m=1$ the general formulas reproduce the Fourier-side calculation from Section~3. For $0<\rho<1$,
$$
 K_1(t)=P_2(\cos t)\sin t=-\frac18\sin t+\frac38\sin3t,
$$
and
$$
 h_1(\rho)=-\frac1{8\sqrt{1-\rho^2}}+\frac3{8\sqrt{9-\rho^2}}
 =-\frac1{18}\rho^2+O(\rho^4),
$$
in agreement with \eqref{eq:hm-leading}. Appendix~C records the values of $h_m(1/2)$ for $m\le6$.
\end{remark}

\section{Consequences}

Two direct consequences of Theorem~\ref{thm:main} are the failure of Gneiting's implication in every even dimension and the corresponding uniform support threshold.

\subsection{Failure in even dimensions}

\begin{corollary}\label{cor:failure}
Gneiting's Problem has a negative answer in every even dimension. Degree $n=2$ already detects the failure, and the support can be confined to an arbitrarily small prescribed radius.
\end{corollary}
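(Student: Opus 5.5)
The corollary is a direct repackaging of Theorem~\ref{thm:main}, so the plan is simply to unpack its three assertions and match each to a piece of that theorem.

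\emph{Failure in every even dimension.} Fix an even dimension $d=2m$ with $m\ge1$ and take $\theta=\pi$. Theorem~\ref{thm:main} produces $f\in\Phi_{2m}$ with $\supp f\subset[0,\pi]$ and $f|_{[0,\pi]}\notin\Psi_{2m}$. This is exactly a counterexample to the implication \eqref{eq:gneiting} in Gneiting's Problem for $d=2m$. For $d=2$ one may alternatively cite Theorem~\ref{thm:d2}.

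\emph{Degree two detects the failure.} The witness of non-membership in $\Psi_{2m}$ is the single inequality \eqref{eq:main-coeff}. Because $\supp f\subset[0,\theta]\subset[0,\pi]$, the integral in \eqref{eq:main-coeff} coincides with $I_{2m,2}(f|_{[0,\pi]})$ from \eqref{eq:schoenberg-sign}. Multiplying by $\kappa_{2m,2}>0$ shows that the Schoenberg coefficient $a_{2m,2}$ would have to be negative. This contradicts Schoenberg's criterion, so the degree $n=2$ coefficient alone rules out $f|_{[0,\pi]}\in\Psi_{2m}$.

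\emph{Arbitrarily small support.} Given any prescribed radius $0<\theta\le\pi$, apply Theorem~\ref{thm:main} with that $\theta$. This yields the same conclusion together with $\supp f\subset[0,\theta]$. Hence no positive upper bound on the support restores the implication, which also answers Problem~\ref{prob:even} negatively for every $\theta$.

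There is no genuine obstacle here. The only point requiring care is the observation that, since the support lies inside $[0,\pi]$, the integral over $[0,\theta]$ in \eqref{eq:main-coeff} equals the full spherical integral over $[0,\pi]$ defining $I_{2m,2}$; this observation was already made at the end of the proof of Theorem~\ref{thm:main}.
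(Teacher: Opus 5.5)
Your proposal is correct and matches the paper's proof: both apply Theorem~\ref{thm:main} at a prescribed support radius $\theta$ and conclude non-membership in $\Psi_{2m}$ from the negative degree-two integral via \eqref{eq:schoenberg-sign}. Treating $\theta=\pi$ and small $\theta$ separately, as you do, simply spells out the same argument in more detail.
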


\begin{proof}
Let $d=2m$ and prescribe $0<\theta<\pi$. Theorem~\ref{thm:main} gives $f\in\Phi_d$ with $\supp f\subset[0,\theta]$ and a negative degree-two Gegenbauer integral. By \eqref{eq:schoenberg-sign}, the unchanged geodesic function does not belong to $\Psi_d$. Since $\theta$ is arbitrary, the conclusion holds below every prescribed positive support bound.
\end{proof}

\subsection{The unrestricted support gap}
For a subclass $\mathcal A\subset\Phi_d$, define
$$
 \Theta_d(\mathcal A):=\sup\Bigl\{\theta_0\in[0,\pi]:
 \text{for every }f\in\mathcal A\text{ with }\supp f\subset[0,\theta_0],\
 f|_{[0,\pi]}\in\Psi_d\Bigr\}.
$$
The unrestricted gap is $\Theta_d(\Phi_d)$. The defining set is nonempty: $\theta_0=0$ is admissible vacuously, so $\Theta_d(\mathcal A)\ge0$.

\begin{corollary}\label{cor:gap}
For every even $d\ge2$,
$$
 \Theta_d(\Phi_d)=0,
$$
whereas for every odd $d\ge1$,
$$
 \Theta_d(\Phi_d)=\pi.
$$
The zero-gap conclusion in even dimensions remains true even if the Euclidean class is restricted to the strictly positive definite functions constructed in Theorem~\ref{thm:main}.
\end{corollary}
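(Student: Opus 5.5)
The plan is to handle the two parities separately. In each case I compare the definition of $\Theta_d$ with what is already known: Theorem~\ref{thm:main} for even $d$ and Theorem~\ref{thm:odd} for odd $d$.

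\emph{Even $d=2m$.} By the remark following the definition, $\Theta_d(\Phi_d)\ge0$, so it suffices to show that no $\theta_0\in(0,\pi]$ lies in the defining set. Fix such a $\theta_0$ and apply Theorem~\ref{thm:main} with $\theta:=\theta_0$. This produces $f\in\Phi_{2m}$ with $\supp f\subset[0,\theta_0]$ and a negative degree-two integral \eqref{eq:main-coeff}. Because $\supp f\subset[0,\pi]$, that integral equals $I_{2m,2}(f|_{[0,\pi]})$, and \eqref{eq:schoenberg-sign} then gives $f|_{[0,\pi]}\notin\Psi_{2m}$. Hence $\theta_0$ is not admissible. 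The defining set is therefore exactly $\{0\}$, and its supremum is $0$.

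For the refinement to strictly positive definite functions, let $\mathcal A$ be the subclass of strictly positive definite members of $\Phi_d$; the same argument applies to any subclass containing the functions produced by Theorem~\ref{thm:main}. The constructed $f$ is a normalized autocorrelation $g*\wt g$ with $g$ nonzero, so Lemma~\ref{lem:autocorr} (see Remark~\ref{rem:main-strict}) shows $f\in\mathcal A$. The same $f$ therefore witnesses the non-admissibility of every $\theta_0>0$ for $\Theta_d(\mathcal A)$, and $\Theta_d(\mathcal A)=0$ as well.

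\emph{Odd $d$.} Theorem~\ref{thm:odd} says that every $f\in\Phi_d$ with $\supp f\subset[0,\pi]$ satisfies $f|_{[0,\pi]}\in\Psi_d$. So $\theta_0=\pi$ is admissible. Since the defining set is contained in $[0,\pi]$, its supremum is exactly $\pi$.

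There is no genuine obstacle here. The only point needing care is that the supremum is taken over $[0,\pi]$ and that $\theta_0=\pi$ itself is included: Theorem~\ref{thm:main} allows $\theta=\pi$, and Theorem~\ref{thm:odd} handles support in $[0,\pi]$. One should also note explicitly that the Schoenberg sign condition \eqref{eq:schoenberg-sign} is evaluated on the restriction, which is legitimate because the support lies inside $[0,\pi]$.
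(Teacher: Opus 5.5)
Your proposal is correct and follows essentially the same route as the paper. For even $d$ you use Theorem~\ref{thm:main} to rule out every positive $\theta_0$, and you take $\theta=\theta_0$ directly rather than some $\theta<\min\{\theta_0,\pi\}$, which is an immaterial difference; strictness comes from Lemma~\ref{lem:autocorr}, and for odd $d$ you combine Theorem~\ref{thm:odd} with the trivial bound $\Theta_d\le\pi$, exactly as the paper does.
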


\begin{proof}
If $d$ is even, then for every $\theta_0>0$ one may choose $0<\theta<\min\{\theta_0,\pi\}$. Theorem~\ref{thm:main} supplies a member of $\Phi_d$ supported in $[0,\theta]\subset[0,\theta_0]$ whose geodesic version does not belong to $\Psi_d$. Hence no positive function-independent radius is admissible and $\Theta_d(\Phi_d)=0$.

If $d$ is odd, Theorem~\ref{thm:odd} shows that every member of $\Phi_d$ supported in $[0,\pi]$ belongs to $\Psi_d$ after restriction. Since the defining supremum cannot exceed $\pi$, $\Theta_d(\Phi_d)=\pi$. Strict positive definiteness of the even-dimensional examples follows from Lemma~\ref{lem:autocorr}.
\end{proof}

\begin{remark}\label{rem:positive-gap}
	Corollary~\ref{cor:gap} determines the threshold for the full class $\Phi_d$, but
	it leaves open the behavior of structurally restricted subclasses.  It would be
	of interest to identify additional assumptions under which a positive threshold
	exists; natural candidates include,  for example, P\'olya-type shape conditions and
	dimension-lifting hypotheses.  Some
	such assumptions give the maximal threshold: in dimension two, Gneiting's
	theorem yields $\Theta_2(\Phi_3)=\pi$, and related P\'olya criteria give further
	full-gap subclasses; see \cite{BeatsonCastellXu,Gneiting2013}.  By contrast,
	Theorem~\ref{thm:main} shows that $C_c^\infty$ regularity and strict
	Euclidean positive definiteness, even together, do not suffice.
\end{remark}

\begin{remark}[Sharpness in the dimension parameter]\label{rem:dimension-sharp}
Let $m\ge1$ and let $f$ be produced by Theorem~\ref{thm:main} in dimension $2m$. Since $\Phi_{2m}\subset\Phi_{2m-1}$ and $\supp f\subset[0,\pi]$, Theorem~\ref{thm:odd} gives $f|_{[0,\pi]}\in\Psi_{2m-1}$, whereas Theorem~\ref{thm:main} gives $f|_{[0,\pi]}\notin\Psi_{2m}$. Thus these counterexamples belong to $\Psi_{2m-1}\setminus\Psi_{2m}$.
\end{remark}

\section*{Acknowledgments}
The author is grateful to Professor Feng Dai for helpful discussions.
This work was supported by the Research Start-up Fund of North China University of Technology.

\appendix

\section{An independent geometric check in dimension two}
The sign proof in Section~3 is based on an exact Fourier--Bessel series. For comparison, we give here a direct check based on disk intersections and record the corresponding autocorrelation formula.

Let $B_R$ and $B_S$ be disks centered at the origin. For $x\in\R^2$ with $|x|=t$,
$$
 (\one_{B_R}*\one_{B_S})(x)
 =\int_{\R^2}\one_{B_R}(y)\one_{B_S}(x-y)\dd y
 =|B_R\cap(x-B_S)|.
$$
Define
$$
 L_{R,S}(t):=|B_R\cap(x-B_S)|,\qquad |x|=t,
$$
and
$$
 \Delta_{R,S}(t):=
 \sqrt{(-t+R+S)(t+R-S)(t-R+S)(t+R+S)}.
$$
The intersection area is
$$
 L_{R,S}(t)=\pi\min(R,S)^2,\qquad 0\le t\le |R-S|,
$$
while, for $|R-S|<t<R+S$,
$$
\begin{aligned}
 L_{R,S}(t)
 &=R^2\arccos\!\left(\frac{t^2+R^2-S^2}{2tR}\right)
 +S^2\arccos\!\left(\frac{t^2+S^2-R^2}{2tS}\right)\\
 &\qquad-\frac12\Delta_{R,S}(t).
\end{aligned}
$$
Finally, $L_{R,S}(t)=0$ for $t\ge R+S$.
For
$$
 (c_1,c_2,c_3)=(-5,5,-3),\qquad
 (r_1,r_2,r_3)=\left(\frac7{10},\frac65,\frac{31}{20}\right),
$$
we have $g=\sum_i c_i\one_{B_{r_i}}$ and therefore
$$
 F(t)=\sum_{i,j=1}^3c_ic_jL_{r_i,r_j}(t)
 =\sum_{i=1}^3c_i^2L_{r_i,r_i}(t)
 +2\sum_{1\le i<j\le3}c_ic_jL_{r_i,r_j}(t).
$$
The largest support endpoint is $2r_3=31/10$. The distinct break points are
$$
 0,\ \frac7{20},\ \frac12,\ \frac{17}{20},\ \frac75,\ \frac{19}{10},\
 \frac94,\ \frac{12}{5},\ \frac{11}{4},\ \frac{31}{10}.
$$
They are precisely the distinct values $|r_i-r_j|$ and $r_i+r_j$. Hence
\begin{equation}\label{eq:appendix-A}
 \int_0^{31/10}F(t)P_2(\cos t)\sin t\dd t
 =\sum_{i,j=1}^3c_ic_j\int_0^{r_i+r_j}L_{r_i,r_j}(t)P_2(\cos t)\sin t\dd t.
\end{equation}
Using 80-digit working precision and numerical quadrature separately on the intervals determined by the displayed break points, the piecewise formula in \eqref{eq:appendix-A} gives
$$
 \int_0^{31/10}F(t)P_2(\cos t)\sin t\dd t
 =-0.03834296009689317\ldots,
$$
in agreement with $\pi D/4$ from \eqref{eq:I-D}. This is only an independent numerical check; the rigorous sign certificate is the exact rational argument in Section~3.

At the origin, $L_{R,S}(0)=\pi\min(R,S)^2$, and substitution gives
$$
 F(0)=\sum_{i,j=1}^3c_ic_j\pi\min(r_i,r_j)^2=\frac{6749\pi}{400}.
$$
If $R\ne S$, then $L_{R,S}$ is constant for $0\le t\le|R-S|$, so its right derivative at zero vanishes. For equal radii,
$$
 L_{R,R}(t)=2R^2\arccos\left(\frac{t}{2R}\right)-\frac t2\sqrt{4R^2-t^2},
 \qquad 0<t<2R,
$$
and
$$
 L'_{R,R}(t)=-\sqrt{4R^2-t^2},\qquad L'_{R,R}(0+)=-2R.
$$
Hence only the diagonal terms contribute to the derivative of $F$ at the origin, and
$$
 F'(0+)=-2\sum_{i=1}^3c_i^2r_i=-\frac{1229}{10}<0.
$$

\section{Sine coefficients for \texorpdfstring{$K_m$}{Km}}
From
$$
 \sin t=\frac{e^{it}-e^{-it}}{2i}
$$
and the binomial theorem,
$$
 \sin^{2p+1}t
 =\frac1{(2i)^{2p+1}}\sum_{j=0}^{2p+1}(-1)^j\binom{2p+1}{j}e^{i(2p+1-2j)t}.
$$
Pairing conjugate frequencies gives
\begin{equation}\label{eq:sine-power}
 \sin^{2p+1}t
 =2^{-2p}\sum_{j=0}^p(-1)^{p-j}\binom{2p+1}{j}
 \sin\bigl((2p+1-2j)t\bigr).
\end{equation}
For an odd positive integer $a$, define
$$
 s_{p,a}:=
 \begin{cases}
 2^{-2p}(-1)^{(a-1)/2}
 \displaystyle\binom{2p+1}{(2p+1-a)/2},&1\le a\le2p+1,\\
 0,&a>2p+1.
 \end{cases}
$$
Then
$$
 \sin^{2p+1}t=\sum_{\substack{a\ge1\\a\ \mathrm{odd}}}s_{p,a}\sin(at).
$$
Substitution into \eqref{eq:Km-sinepowers} yields
\begin{equation}\label{eq:qma}
 q_{m,a}=s_{m-1,a}-\frac{2m+1}{2m}s_{m,a},
 \qquad a=1,3,\dots,2m+1.
\end{equation}
Thus all $q_{m,a}$ are rational.

For comparison with Appendix~C, the first six cases are
\begin{align*}
 K_1(t)&=-\frac18\sin t+\frac38\sin3t,\\
 K_2(t)&=-\frac1{32}\sin t+\frac9{64}\sin3t-\frac5{64}\sin5t,\\
 K_3(t)&=-\frac5{384}\sin t+\frac9{128}\sin3t-\frac{25}{384}\sin5t+\frac7{384}\sin7t,\\
 K_4(t)&=-\frac7{1024}\sin t+\frac{21}{512}\sin3t-\frac{25}{512}\sin5t\\
 &\qquad+\frac{49}{2048}\sin7t-\frac9{2048}\sin9t,\\
 K_5(t)&=-\frac{21}{5120}\sin t+\frac{27}{1024}\sin3t-\frac{75}{2048}\sin5t\\
 &\qquad+\frac{49}{2048}\sin7t-\frac{81}{10240}\sin9t+\frac{11}{10240}\sin11t,\\
 K_6(t)&=-\frac{11}{4096}\sin t+\frac{297}{16384}\sin3t-\frac{1375}{49152}\sin5t\\
 &\qquad+\frac{539}{24576}\sin7t-\frac{81}{8192}\sin9t
 +\frac{121}{49152}\sin11t-\frac{13}{49152}\sin13t.
\end{align*}

\section{Low-dimensional Fourier-side checks}

Formula~(\ref{eq:hm}) also provides direct checks in low
dimensions. At $\rho=1/2$, substituting the rational coefficients from
(\ref{eq:qma}) into (\ref{eq:hm}) gives the following
outward-rounded intervals. The positive factor $(2\pi)^m$ appearing in
(\ref{eq:Q-transform}) is omitted.

\begin{center}
	\renewcommand{\arraystretch}{1.15}
	\begin{tabular}{c c c}
		\hline
		$d$ & $m$ & interval for $h_m(1/2)$ \\
		\hline
		$2$  & $1$ &
		$[-1.756442908812896,-1.756442908812895]\times 10^{-2}$ \\
		
		$4$  & $2$ &
		$[-9.763930446185019,-9.763930446185018]\times 10^{-4}$ \\
		
		$6$  & $3$ &
		$[-6.384675110274647,-6.384675110274646]\times 10^{-5}$ \\
		
		$8$  & $4$ &
		$[-4.040295414789628,-4.040295414789627]\times 10^{-6}$ \\
		
		$10$ & $5$ &
		$[-2.365992657683071,-2.365992657683070]\times 10^{-7}$ \\
		
		$12$ & $6$ &
		$[-1.268759749301806,-1.268759749301805]\times 10^{-8}$ \\
		\hline
	\end{tabular}
\end{center}

The table is included only as a low-dimensional consistency check and
is not used in the proof of Theorem~\ref{thm:main}. Each displayed
endpoint is a rational number with denominator a power of ten. The
required half-integer powers can be enclosed by squaring the
corresponding rational decimal bounds. Propagating these enclosures
through (\ref{eq:hm}) gives the intervals displayed above.

\section{Exact rational certificate}\label{app:exact-certificate}
For completeness, we verify \eqref{eq:D16-bound} and \eqref{eq:B17-bound} using only integer arithmetic. Set
$$
 (r_1,r_2,r_3)=\left(\frac7{10},\frac65,\frac{31}{20}\right),
 \qquad (c_1,c_2,c_3)=(-5,5,-3).
$$
For $0\le k\le16$, define
\begin{equation}\label{eq:app-bk}
 b_k=\frac{(-1)^k}{2^{2k+1}k!(k+1)!}
 \sum_{i=1}^3c_i r_i^{2k+2},
 \qquad 0\le k\le16,
\end{equation}
and, for $0\le m\le16$, define
\begin{equation}\label{eq:app-CW}
 C_m=\sum_{j=0}^m b_jb_{m-j},
 \qquad
 W_m=\frac{4^m(m!)^2}{(2m+1)!}.
\end{equation}
Then
\begin{equation}\label{eq:app-D16}
 D_{16}=\sum_{m=0}^{16}C_mW_m(9^{m+1}-1).
\end{equation}
Set
\begin{align*}
 L_D&:=2^{101}\cdot3^{14}\cdot5^{42}\cdot7^4\cdot11^3\cdot13^2
 \cdot17^2\cdot23\cdot29\cdot31,\\
 N_D&:=7294509213275963904085367091140930144965\cdot10^{39}\\
 &\qquad{}+128759819247999529202426415689678530677.
\end{align*}
Substitution of \eqref{eq:app-bk} and \eqref{eq:app-CW} into \eqref{eq:app-D16}, followed by cancellation of common factors, gives the exact integer identity
\begin{equation}\label{eq:app-D16-certificate}
 L_D\left(D_{16}+\frac6{125}\right)=-N_D<0.
\end{equation}
This proves \eqref{eq:D16-bound}. For the tail estimate, set
\begin{equation}\label{eq:app-B17-def}
 \gamma=9\left(\frac{31}{20}\right)^2,
 \qquad
 K=\frac{1521}{4}\left(\frac{31}{20}\right)^4,
 \qquad
 B_{17}=\frac{K\gamma^{17}}{35(17!)^2}.
\end{equation}
Set
\begin{align*}
 L_B&:=2^{107}\cdot5^{45}\cdot7^5\cdot11^2\cdot17^2,\\
 N_B&:=138405966121105283700517527772632007\cdot10^{33}\\
 &\qquad{}+362169926250808007557319587264479.
\end{align*}
Again, cancellation of common factors gives the exact integer identity
\begin{equation}\label{eq:app-B17-certificate}
 L_B\left(\frac1{10000}-2B_{17}\right)=N_B>0.
\end{equation}
This proves \eqref{eq:B17-bound}. Both displayed certificates are identities between rational numbers and can be checked by cross multiplication. Together with \eqref{eq:D-tail}, they complete the exact proof of \eqref{eq:D-negative}.

\end{document}